\documentclass{article}

\usepackage{amsmath}
\usepackage{amsthm}
\usepackage{amsfonts}
\usepackage{amsbsy}
\usepackage{amssymb}
\bibliographystyle{amsplain}
\newtheorem{theorem}{Theorem}
\newtheorem{proposition}{Proposition}
\newtheorem{corollary}{Corollary}

\newcommand{\R}{{\mathbb R}}

\newcommand{\set}[2]{ \left\{ #1 \ \left| \ #2 \right. \right\} }

\title{$L^p$-improving estimates for averages on polynomial curves}
\author{Philip T. Gressman\footnote{Partially supported by NSF grant DMS-0850791.}}
\begin{document}

\maketitle
\begin{abstract}
In the combinatorial method proving of $L^p$-improving estimates for averages along curves pioneered by Christ \cite{christ1998}, it is desirable to estimate the average modulus (with respect to some uniform measure on a set) of a polynomial-like function from below using only the value of the function or its derivatives at some prescribed point.   In this paper, it is shown that there is always a relatively large set of points (independent of the particular function to be integrated) for which such estimates are possible.
%
%
Inequalities of this type are then applied to extend the results of Tao and Wright \cite{tw2003} to obtain endpoint restricted weak-type estimates for averages over curves given by polynomials.
\end{abstract}

The purpose of this paper is twofold.  First, a somewhat surprising inequality for $L^1(\mu)$-norms of polynomial-type functions on the real line will be established.   The most important special case of this inequality is as follows:
\begin{theorem}
Suppose that $K \subset \R$ is measurable.  For any positive integer $n$ and any $0 < \epsilon < 1$, there is an interval $I$ with $|K \cap I| \geq \frac{1-\epsilon}{n} |K|$ (here $| \cdot |$ acting on sets denotes Lebesgue measure) and a constant $c_{n,\epsilon}$ such that \label{theorem0}
\begin{equation} \int_{K} |p(t)| dt \geq c_{n,\epsilon} |K|^{j+1} \sup_{t \in I} |p^{(j)}(t)| \label{ineq0} \end{equation}
for any polynomial $p$ of degree $n$ or less and any $j=0,\ldots,n$.
\end{theorem}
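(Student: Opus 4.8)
The plan is to strip off the homogeneities, reduce to the single case $j=0$ by Markov's inequality, and concentrate the real work on producing the interval $I$ in that base case. First, both sides of \eqref{ineq0} are homogeneous of degree one in $p$, and under an affine change of variable $t\mapsto at+b$ (replacing $K$ and $I$ by their preimages and $p$ by $p(a\,\cdot\,{+}\,b)$) both sides rescale identically; so one may normalize $|K|=1$, and by inner regularity it suffices to treat bounded $K$ with $|K|>0$. Next, suppose an interval $I$ with $|K\cap I|\ge\frac{1-\epsilon}{n}|K|$ has been found for which $\int_K|p|\,dt\ge c\,|K|\sup_I|p|$ holds for all $p$ of degree $\le n$. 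Applying Markov's inequality to the polynomial $p^{(j-1)}$ (of degree $\le n$) on $I$ gives $\sup_I|p^{(j)}|\le\frac{2n^2}{|I|}\sup_I|p^{(j-1)}|$, and iterating, $\sup_I|p|\ge\bigl(|I|/2n^2\bigr)^j\sup_I|p^{(j)}|$; since $|I|\ge|K\cap I|\ge\frac{1-\epsilon}{n}|K|$, this yields $\int_K|p|\ge c\bigl(\tfrac{1-\epsilon}{2n^3}\bigr)^j|K|^{j+1}\sup_I|p^{(j)}|$, so \eqref{ineq0} holds for every $j=0,\dots,n$ with $c_{n,\epsilon}=c\,(\tfrac{1-\epsilon}{2n^3})^n$. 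Thus everything reduces to the base case: locate an interval $I$ with $|K\cap I|\ge\frac{1-\epsilon}{n}|K|$ such that $\sup_I|p|\le C_{n,\epsilon}|K|^{-1}\int_K|p|$ for every polynomial $p$ of degree $\le n$.

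For the base case I would choose $I$ by a scale-selection argument driven by $\nu:=\mathbf{1}_K\,dt$. Let $r_0$ be the smallest radius $r$ with $\sup_x\nu\bigl((x-r,x+r)\bigr)\ge\frac1n|K|$; then every interval of radius $r_0$ carries $\nu$-mass at most $\frac1n|K|$, while some $I_0=(x_0-r_0,x_0+r_0)$ carries exactly $\frac1n|K|\ge\frac{1-\epsilon}{n}|K|$. Working at this \emph{critical} scale forces $K$ to be spread across $I_0$: no subinterval of radius $<r_0$ — and, quantitatively, no union of a bounded number of subintervals of small total length — can account for most of $K\cap I_0$. One then invokes a Remez--Tur\'an-type estimate: for $p$ of degree $\le n$ the sublevel set $\{t\in I_0:|p(t)|<\lambda\sup_{I_0}|p|\}$ is a union of at most $2n$ intervals of total length $\lesssim_n\lambda^{1/n}|I_0|$, so fixing $\lambda=\lambda(n,\epsilon)$ small enough one gets $\bigl|\{t\in K\cap I_0:|p(t)|\ge\lambda\sup_{I_0}|p|\}\bigr|\gtrsim_{n,\epsilon}|K|$, whence $\int_K|p|\ge\int_{K\cap I_0}|p|\ge\lambda\sup_{I_0}|p|\cdot\bigl|\{\dots\}\bigr|\gtrsim_{n,\epsilon}|K|\sup_{I_0}|p|$, as required.

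The step I expect to be the main obstacle — the one that genuinely needs the fraction $\frac{1-\epsilon}{n}$ and makes the constant uniform in $K$ — is this quantitative non-concentration bound: controlling, in terms of $n$ and $\epsilon$ alone, how much $\nu$-mass a union of $2n$ intervals of controlled total length can capture inside the critically chosen interval. When $K$ behaves like a continuum it follows directly from the Remez inequality (the sublevel set is small and $\nu$ is comparably spread), but when $K$ is nearly atomic — a union of many tiny well-separated pieces — the crude bound ``$\nu$-mass $\le$ length'' is useless and one is pushed into a Marcinkiewicz--Zygmund-type regime, where the average of $|p|$ over the pieces controls $\sup_{I_0}|p|$. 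Reconciling these two behaviors is where the care lies; I would attempt it by an induction on $n$ in which one component of the sublevel set is removed at the cost of one degree and a controlled loss of $\nu$-mass, anchored by the elementary case $n=1$, in which polynomials are monotone on $I$ and the estimate is essentially trivial. The parameter $\epsilon$ is precisely the slack that absorbs the accumulated losses.
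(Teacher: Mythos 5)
Your normalization step and the Markov-inequality reduction from general $j$ to $j=0$ are fine (since any admissible $I$ has $|I|\ge\frac{1-\epsilon}{n}|K|$, Markov only helps), but the base case --- which is the actual content of the theorem --- does not go through, and the key claim it rests on is false. Take $n\ge 2$, $\epsilon<\frac12$, and let $K$ be the union of $2n$ clumps of length $\frac{1}{2n}$ centered at $0,d,2d,\ldots,(2n-1)d$ with $d$ huge, so $|K|=1$. Your critical radius is $r_0=\frac d2+\frac1{4n}$, and every interval of radius $r_0$ carrying mass $\frac1n$ consists of exactly two consecutive clumps sitting at its two ends; so your selection rule produces an $I_0$ of length about $d$ whose $\nu$-mass is entirely concentrated in two pieces of total length $\frac1n$, a fraction $\approx\frac{1}{nd}$ of $|I_0|$. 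Criticality at scale $r_0$ does not force spreading at smaller scales: it only says that no interval of radius $<r_0$ holds $\frac1n$ of the \emph{total} mass, which is perfectly compatible with the mass inside $I_0$ being atomic (each clump alone holds only $\frac1{2n}$). Consequently the Remez/sublevel step collapses: for $p(t)=(t-a)(b-t)$, with $a,b$ the inner endpoints of the two clumps, $\sup_{I_0}|p|\approx\frac{d^2}{4}$ while $|p|\le\frac{d}{2n}$ on $K\cap I_0$, so for any fixed $\lambda=\lambda(n,\epsilon)$ the sublevel set $\{t\in I_0:|p(t)|<\lambda\sup_{I_0}|p|\}$ swallows all of $K\cap I_0$ once $d$ is large, and $\int_{K\cap I_0}|p|\le\frac{d}{2n^2}\ll\sup_{I_0}|p|$. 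No choice of $\lambda$ depending only on $(n,\epsilon)$ rescues the claimed non-concentration bound.

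The deeper problem is the localization $\int_K|p|\ge\int_{K\cap I_0}|p|$ itself, not just the choice of $I_0$: in this example every interval $I$ with $|K\cap I|\ge\frac{1-\epsilon}{n}$ must bridge two clumps (hence $|I|\gtrsim d$), and for the near-minimal ones --- including the one your rule selects --- $\sup_I|p|$ is genuinely not controlled by the integral over $K\cap I$. The theorem is still true for this $K$, but only because $\int_K|p|$ picks up mass from clumps far outside $I$: a Lagrange-interpolation bound at the clump centers gives $\sup_{\mathrm{hull}}|p|\le C_n\max_j|p(jd)|$, and the maximizing clump alone contributes $\gtrsim_n\max_j|p(jd)|$ to the integral. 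So any correct proof must bound $\sup_I|p|$ by the integral over \emph{all} of $K$, which is exactly how the paper proceeds: Proposition \ref{averageprop} converts an alternating Vandermonde-weighted combination of values at widely spread sample points into an average of $f^{(n)}$, Proposition \ref{intervals} builds the hierarchy of children intervals of $\mu$, Theorem \ref{theorem2} (with Proposition \ref{closegaps} to merge components) produces the $p$-independent set $E$ with few components, and the interval $I$ of Theorem \ref{theorem0} is a single component of $E$ while the lower bound still uses the whole of $K$. Your concluding suggestion (induction on $n$, deleting one sublevel component per degree) does not address this global issue; the Markov reduction can be kept, but the critical-scale/Remez core of your plan has to be replaced by an argument of this global interpolation type.
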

It is fairly trivial to show that such an interval $I$ can always be found when $p$ is given, but somewhat unexpected that $I$ exists independently of $p$.  It is also farily straightforward to construct a family of counterexamples to this inequality if one takes $\epsilon=0$, for example (which suggests that there must be a certain amount of subtlety involved in proving the positive result).  This theorem and its generalization to regular probability measures $\mu$ will be taken up in the first section.

The second major purpose of this paper is to apply the inequality \eqref{ineq0} to answer a question of Tao and Wright \cite{tw2003} concerning $L^p$-improving bounds for averages along curves.  In that work, inequalities of the form \eqref{ineq0} appear naturally while carrying out the combinatorial methods of Christ \cite{christ1998}.  Tao and Wright were able to establish \eqref{ineq0} in the case when $K$ has some additional structure ($K$ was taken to be a central set of width $w$, see lemma 7.3); the price of requiring such structure was certain losses in exponents which made it impossible to obtain $L^p$-improving inequalities on the boundary of the type set.
Interpreted in the framework of that paper, theorem \ref{theorem0} indicates that the set $K$ does not need any significant structure (namely, $K$ does not need to be a central set of width $w$) for the desirable inequality \eqref{ineq0} to hold. As a consequence, theorem \ref{theorem0} alone makes it possible to prove the full range of restricted weak-type estimates for one-dimensional averaging operators given by polynomial curves (that is, when both the averaging operator and the dual operator can be described by polynomial functions in appropriate coordinate systems).

  This result is formulated in the standard bilinear way as follows: suppose $U$ is an open ball in $\R^{d+1}$ and one is given projections $\pi_1 : U \rightarrow \R^d$ and $\pi_2 : U \rightarrow \R^d$ such that the differentials $d \pi_1$ and $d \pi_2$ are surjective at every point.  The Radon-like operator $R$ associated to these projections is defined by duality as
\begin{equation}
 \int_{\R^{d}} R f(y) g(y) dy := \int_U f (\pi_1(x)) g(\pi_2(x)) \psi(x) dx \label{theop}
\end{equation}
where $\psi$ is some bounded cutoff function (not necessarily smooth) supported in $U$.
Next, let $X_1$ and $X_2$ be vector fields on $U$ which are nonvanishing and satisfy $d \pi_1 (X_1) = 0 = d \pi_2 (X_2)$,  and suppose that for all words $w = (w_1,w_2,\ldots,w_k)$ (each $w_j$ equals $1$ or $2$) of sufficient length, the commutator
\[ X_w := [ X_{w_1}, [ X_{w_2}, [ \cdots [X_{w_{k-1}},X_{w_k}] \cdots]]] \]
vanishes identically on $U$.  This geometric condition guarantees that both $R$ and $R^*$ are given by averages over polynomial curves.  Under these conditions the following theorem holds:
\begin{theorem}
Suppose that the vector fields $X_1$, $X_2$ are as assumed above.  Let $x_0 \in U$, and consider the mapping \label{averageop}
\[ \Phi_{x_0}(t_1,\ldots,t_{d+1}) := \exp(t_1 X_1) \circ \cdots \circ \exp(t_{d+1} X_{d+1})(x_0) \]
(where the periodicity convention $X_{j+2} = X_j$ is used).  Let $J_{x_0}(t)$ be the Jacobian determinant of this mapping (as a function of the parameters $t$).  If $\partial_t^\alpha J_{x_0}(t) \neq 0$ at $t=0$
for some multiindex $\alpha = (\alpha_1,\ldots,\alpha_{d+1})$, then the averaging operator $R$ given by \eqref{theop}
satisfies a restricted weak-type estimate 
\[ \left| \int_U \chi_F(\pi_1(x)) \chi_G(\pi_2(x)) \psi(x) dx \right| \leq C |F|^\frac{1}{p_1} |G|^\frac{1}{p_2} \]
when the support of $\psi$ is a sufficiently small neighborhood of $x_0$ and $\frac{1}{p_1} := \frac{A_1}{A_1 + A_2 -1}, \frac{1}{p_2} := \frac{A_2}{A_1+A_2 - 1}$ where
\begin{align*} A_1 & := \left\lceil \frac{d+1}{2} \right\rceil + \alpha_1 + \cdots + \alpha_{2\lceil \frac{d+1}{2} \rceil-1}   , \ 
A_2  := \left\lfloor \frac{d+1}{2} \right\rfloor + \alpha_2  + \cdots + \alpha_{2\lfloor \frac{d+1}{2} \rfloor}.
\end{align*}
\end{theorem}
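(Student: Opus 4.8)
The plan is to run the combinatorial ``refinement plus inflation'' method of Christ \cite{christ1998}, in the form developed by Tao and Wright \cite{tw2003}, substituting Theorem~\ref{theorem0} (in its general form for regular measures) for the place where \cite{tw2003} needed the relevant one-dimensional set to be a central set of width $w$. Since $\psi$ is bounded it suffices to bound $|E|$, where $E := \{ x \in \operatorname{supp}\psi : \pi_1(x)\in F,\ \pi_2(x)\in G\}$, because the bilinear expression is at most $\|\psi\|_\infty |E|$. First I would run the pigeonhole refinements of \cite{christ1998,tw2003} --- a bounded number of steps, hence with no logarithmic losses, since we test only on characteristic functions --- to replace $E$ by a uniform subset $E'$ with $|E'|\gtrsim |E|$ and extract two ``widths'' $u$ (associated to $X_1$) and $v$ (associated to $X_2$): these are comparable to the typical length of the portion of a fibre of $\pi_1$, respectively $\pi_2$, lying in $E'$, a fibre of $\pi_i$ being an arc of an integral curve of $X_1$, respectively $X_2$, because $d\pi_1(X_1)=0=d\pi_2(X_2)$. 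Fibering $E'$ over $F$ by $\pi_1$ and over $G$ by $\pi_2$ then gives the elementary bounds
\begin{equation} |E'| \ \lesssim\ u\,|F| , \qquad\qquad |E'| \ \lesssim\ v\,|G| . \label{gress:fib} \end{equation}

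For the curvature input I would show that from a suitably ``good'' point $x\in E'$ the alternating flow $\Phi_x$ of the statement, restricted to a product-like parameter set $\Omega=\Omega_1\times\cdots\times\Omega_{d+1}$ with $|\Omega_i|\approx u$ for odd $i$ and $|\Omega_i|\approx v$ for even $i$, maps into a bounded enlargement of $E'$ --- which is exactly what the refinement must be designed to furnish, by admitting at each successive stage only those flow-times that keep the trajectory inside (the enlargement of) $E'$. By the nilpotency hypothesis on the commutators $X_w$, the map $\Phi_x$, and hence the Jacobian determinant $J_x$, is polynomial in $t$ of degree at most some $n$ depending only on $d$ and the step of nilpotency; B\'ezout's theorem then bounds the multiplicity of $\Phi_x$ on $\Omega$ by a constant $B(d)$, so that
\begin{equation} |E'| \ \gtrsim\ |\Phi_x(\Omega)| \ \gtrsim\ B(d)^{-1} \int_\Omega |J_x(t)|\,dt . \label{gress:jac} \end{equation}

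The crux is to bound $\int_\Omega|J_x|$ from below. Since $J_x$ is a polynomial of degree $\le n$ whose coefficients depend continuously on $x$, the hypothesis $\partial_t^\alpha J_{x_0}(0)\neq 0$ gives, after shrinking $\operatorname{supp}\psi$, a constant $c_0>0$ with $|\partial_t^\alpha J_x(t)|\geq c_0$ for all $x\in\operatorname{supp}\psi$ and all $t$ in a fixed neighbourhood of $0$. I would then apply Theorem~\ref{theorem0} successively in $t_1,\ldots,t_{d+1}$: at the $i$-th step to the one-variable polynomial obtained by freezing $t_1,\ldots,t_{i-1}$ at points of the intervals produced at the earlier steps, getting an interval $I_i$ --- depending only on $\Omega_i$, on $n$ and on a fixed choice of $\epsilon$ --- with $|\Omega_i\cap I_i|\gtrsim|\Omega_i|$, trading one integration for $|\Omega_i|^{\alpha_i+1}$ times an $\alpha_i$-th $t_i$-derivative. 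Because each $\Omega_i$ already lies in a small neighbourhood of $0$ (a bounded vector field's flow stays near $x_0$ only for a short time), one may intersect each $I_i$ with that neighbourhood without spoiling $|\Omega_i\cap I_i|\gtrsim|\Omega_i|$, so the point at which the chained inequality is evaluated lies where $|\partial_t^\alpha J_x|\geq c_0$. This gives
\begin{equation} \int_\Omega |J_x(t)|\,dt \ \gtrsim\ \prod_{i=1}^{d+1} |\Omega_i|^{\alpha_i+1} \ \approx\ u^{A_1}\, v^{A_2}, \label{gress:jl} \end{equation}
because the numbers of odd and of even indices among $1,\ldots,d+1$ are $\lceil\frac{d+1}{2}\rceil$ and $\lfloor\frac{d+1}{2}\rfloor$, whence $\sum_{i\ \mathrm{odd}}(\alpha_i+1)=A_1$ and $\sum_{i\ \mathrm{even}}(\alpha_i+1)=A_2$. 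Combining \eqref{gress:jac} and \eqref{gress:jl} yields $|E'|\gtrsim u^{A_1}v^{A_2}$. Writing $\lambda:=|E'|$, the bounds \eqref{gress:fib} read $u\gtrsim\lambda/|F|$ and $v\gtrsim\lambda/|G|$; inserting these into $\lambda\gtrsim u^{A_1}v^{A_2}$ gives $\lambda\gtrsim\lambda^{A_1+A_2}|F|^{-A_1}|G|^{-A_2}$, that is $\lambda^{A_1+A_2-1}\lesssim|F|^{A_1}|G|^{A_2}$; since $|E|\lesssim\lambda$, this is precisely the asserted estimate with $\frac{1}{p_1}=\frac{A_1}{A_1+A_2-1}$ and $\frac{1}{p_2}=\frac{A_2}{A_1+A_2-1}$.

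The main obstacle I anticipate is the refinement itself: arranging that $E'$ simultaneously enjoys all the uniformity used above --- in particular that the $i$-th flow can be run, inside a bounded enlargement of $E'$, over a set $\Omega_i$ of times whose measure is comparable to the \emph{single} scale $u$ or $v$ regardless of the stage $i$, and that the admissible parameter set is essentially a product. This is the technical heart of \cite{christ1998,tw2003}, and it is exactly the step at which \cite{tw2003} was forced to assume the one-dimensional sets were central sets of width $w$, incurring the exponent losses that blocked the endpoint; Theorem~\ref{theorem0}, valid for arbitrary measurable $\Omega_i$, is what removes that assumption. A secondary issue is to reconcile the flow-time widths with the $\pi_i$-fibre widths and to control the polynomial degree, hence the B\'ezout multiplicity, uniformly for $x$ near $x_0$.
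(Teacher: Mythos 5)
Your outline follows the same broad strategy as the paper (Christ--Tao--Wright combinatorics plus Theorem~\ref{theorem0} plus a B\'ezout multiplicity bound), but the step you defer as ``the main obstacle'' is in fact the crux, and your substitute for it does not work as stated. Your chained application of Theorem~\ref{theorem0} in \eqref{gress:jl} requires the admissible set of times $\Omega_i$ --- and hence the interval $I_i$ it produces --- to be independent of the not-yet-processed variables $t_{i+1},\ldots,t_{d+1}$: once you freeze $t_1$ at a point $t_1^*(t_2,\ldots,t_{d+1})$ that varies with the remaining variables, the resulting function is no longer a polynomial in those variables and Theorem~\ref{theorem0} cannot be applied again. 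So you genuinely need product structure with uniform widths $u,v$. But the refinement machinery of \cite{christ1998,tw2003} produces \emph{towers}, not products: the set of admissible $t_i$ depends on the previously chosen flow times, and manufacturing uniform widths is exactly what Tao and Wright's ``central sets of width $w$'' were introduced for --- at the cost of the exponent losses this theorem is meant to remove. The paper's resolution is different and bypasses products entirely: Proposition~\ref{refinement} exploits the one feature of Theorem~\ref{theorem0} that matters here, namely that the interval $I$ is independent of the polynomial, to refine the set of \emph{basepoints} (fiber by fiber, keeping those points sitting inside $I$ on their own integral curve) so that each one-dimensional integral can be estimated by the derivative evaluated exactly at $t_i=0$, with the factor $(|\Omega|/|\pi_i(\Omega)|)^{\alpha_i+1}$ coming from Fubini. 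Because the evaluation point is always $0$, the iteration closes over the naturally arising tower-shaped constraint sets, and no uniform widths, no product boxes, and no pigeonholing are needed; the final inequality \eqref{bigest} then rearranges to the restricted weak-type bound using only $|\pi_1(\Omega)|\leq|F|$, $|\pi_2(\Omega)|\leq|G|$. Without either this recentering device or a construction of your product set, your argument has a hole precisely where the theorem's difficulty lives.

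A second, smaller but real, gap is your assertion that the nilpotency hypothesis makes $\Phi_x$, and hence $J_x$, a polynomial in $t$, so that B\'ezout applies directly. That is false in general: the flows of smooth vector fields generating a nilpotent Lie algebra need not be polynomial in the given coordinates. The paper must lift $X_1,X_2$ to $\R^{\cal N}$ following \cite{cnsw1999}, show that the lifted map $\tilde\Phi_{x_0}$ and a polynomial parametrization $N_x(u)$ of the fibers of $\exp(s\cdot X)(x_0)$ exist, apply B\'ezout to the system $\tilde\Phi_{x_0}(t)=N_x(u)$ (with a Sard argument to discard the measure-zero set of $x$ admitting degenerate solutions), and separately verify that $J_{x_0}(t)$ is polynomial only up to a nonvanishing factor --- which is also what licenses applying Theorem~\ref{theorem0} to it. Your uniform lower bound $|\partial^\alpha_t J_x(t)|\geq c_0$ for $x$ near $x_0$ and $t$ near $0$ is fine and is implicitly used in the paper as well, but the multiplicity and polynomiality claims need the lifting argument, not just the phrase ``by nilpotency.''
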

It should be noted that the hypotheses of theorem \ref{averageop} (regarding the derivatives of the Jacobian determinant) are, in fact, equivalent to the H\"{o}rmander-type hypotheses used by Tao and Wright (see lemma 9.1 in \cite{tw2003} or sections 9 and 10 of the paper of Christ, Nagel, Stein, and Wainger \cite{cnsw1999} in which the double fibration curvature formulation $({\cal C}_{\Lambda})$ is shown to be equivalent to the Jacobian determinant formulation $({\cal C}_J)$).  In light of this equivalence, theorem \ref{averageop} successfully establishes restricted weak-type estimates on the boundary of the type set of the operator \eqref{theop} which were just missed in \cite{tw2003}.

 Regarding integral estimates and the related issue of sublevel sets, earlier results of particular interest to the problem at hand include the work of Carbery, Christ, and Wright \cite{ccw1999}, Phong, Stein, and Sturm \cite{pss2001}, and Phong and Sturm \cite{ps2000}, as well as many others.  In this paper, the attention will be exclusively focused on one-dimensional estimates, using methods similar to those employed by Carbery, Christ and Wright \cite{ccw1999} who built upon ideas of Arhipov, Karacuba and \v{C}ubarikov \cite{akc1979}.  Similar one-dimensional methods have also been employed by Rogers \cite{rogers2005} to obtain sharp constants for sublevel set estimates and van der Corput's lemma.

In the case of Radon-like transforms and averaging operators, the reader is referred to the papers of Tao and Wright \cite{tw2003} and Christ, Nagel, Stein, and Wainger \cite{cnsw1999} for more complete lists of references.  In this paper, the argument to be followed was originally devised by Christ \cite{christ1998}.  Tao and Wright \cite{tw2003} made important additions to the Christ argument which will, of course, be necessary to use here as well.  More recently, these ideas have been adapted to a variety of other contexts by Christ and Erdo{\v{g}}an \cite{ce2002}, \cite{ce2008}, Bennett, Carbery, Christ, and Tao \cite{bcct2008},
Bennet, Carbery, and Wright \cite{bcw2005}, Erdo{\v{g}}an and R. Oberlin \cite{eo2008}, and many others.  Earlier approaches to $L^p$-improving estimates for averaging operators, beginning with Littman \cite{littman1971}, Phong and Stein \cite{ps1986II}, \cite{ps1991}, \cite{ps1994}, \cite{ps1997}, including Greenleaf and Seeger \cite{gs1994}, \cite{gs1998}, Seeger \cite{seeger1993}, \cite{seeger1998}, and D. Oberlin \cite{oberlin1987}, \cite{oberlin1997}, \cite{oberlin1999}, have typically been based on oscillatory integral estimates which will not appear here.  

\section{Estimation of integrals by pointwise values}

To begin this section, a number of definitions are in order.  First, suppose $K$ is a closed set contained in an open interval $I$.  For each nonnegative integer $n$, a function $f \in C^{n}(I)$ is said to be of polynomial type $n$ on $(K,I)$ when $f^{(n)}$ does not change sign (i.e., is nonnegative or nonpositive) and there exists a finite constant $C$ for which $\sup_{t \in I} |f^{(n)}(t)| \leq C \inf_{t \in K} |f^{(n)}(t)|$.  Any polynomial of degree $n$ on $I$ is, of course, polynomial type $n$ on $(K,I)$ for any closed set $K$ contained in $I$.

In general, if a regular probability measure $\mu$ is supported on the closed set $K$, it will necessary to consider functions which are of polynomial type on $(K_\epsilon,I)$ for some set $K_\epsilon$ slightly larger than $K$ (since, if $K$ has Lebesgue measure zero, the values of $f$ on $K$ are largely independent of the values of the derivatives of $f$ on the same set $K$).  To that end, given any closed set $K$ and any $\epsilon$, let $K_\epsilon$ be the union of $K$ and the sets $K_L$ and $K_R$ given by 
\begin{align*}
 K_R & := \set{t \in I}{ \inf \set{d \geq 0}{t + d \in K} \leq \epsilon \inf \set{d \geq 0}{t - d \in K}}, \\
K_L & := \set{t \in I}{ \inf \set{d \geq 0}{t - d \in K} \leq \epsilon \inf \set{d \geq 0}{t + d \in K} }
\end{align*}
($K_R$ and $K_L$ are the points which are bounded on both sides by the set $K$ but are proportionately much closer to $K$ on one side than the other; note that the Lebesgue measures of $K_L$ and $K_R$ are bounded by $\epsilon |I \setminus K|$).

The final definition needed to begin this section is a notion of the length of the set on which $\mu$ is supported: given a regular probability measure $\mu$ supported on an open interval $I \subset \R$, a positive integer $n$, and an $\epsilon \in (0,1)$, let $|\mu|_{n,\epsilon}$ be the infimum of $\sum_{j=1}^n |I_j|$ over all collections of closed intervals $\{I_1,\ldots,I_n\}$ which satisfy $\mu(\bigcup_{j=1}^n I_j) \geq 1-\epsilon$.
Note that $|\mu|_{n,\epsilon}$ is decreasing in $n$, increasing in $\epsilon$, and $|\mu_K|_{n,\epsilon} \geq (1-\epsilon)|K|$ when, for example, $\mu_K$ is normalized Lebesgue measure on the set $K$.

Finally, a remark concerning notation is in order.  The two parameters having already appeared, namely $n$ and $\epsilon$, appear in essentially every inequality to come; in particular, most proportionality constants will vary as these parameters vary.  When the nature of these constants is uninteresting or otherwise considered unimportant, the notation $A \lesssim B$ will be used to indicate that there is a constant $C_{n,\epsilon}$ completely determined by $\epsilon$ and $n$ such that $A \leq C_{n,\epsilon} B$.

The main theorems of this section can now be stated.  The first is as follows:
\begin{theorem}
Let $K$ be a closed set contained in an open interval $I \subset \R$ (possibly infinite).  For any $\epsilon \in (0,1)$, let $K_{\epsilon}$ be the union of $K$ with $K_R$ and $K_L$ where \label{theorem1}
  For any positive integer $n$, any regular probability measure $\mu$ supported on $K$, and any $f \in C^{n}(I)$ for which $f^{(n)}$ does not change sign, it must be true that
\[ \int |f(t)| d \mu(t) \gtrsim |\mu|_{n,\epsilon}^n \inf_{t \in K_\epsilon} |f^{(n)}(t)|. \]
\end{theorem}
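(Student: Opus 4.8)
The plan is to reduce Theorem \ref{theorem1} to Theorem \ref{theorem0} essentially by a normalization argument, but the actual work is the other way around: Theorem \ref{theorem0} is the special case of Theorem \ref{theorem1} where $\mu = \mu_K$ is normalized Lebesgue measure and $f$ is a polynomial. So I would first prove Theorem \ref{theorem1} directly for polynomials and then note that the $f \in C^n$ case follows by a Taylor-expansion / approximation argument. The core estimate should be: if $p$ is a polynomial of degree $\le n$ and $\{I_1,\ldots,I_n\}$ is \emph{any} family of intervals, then $\int |p|\,d\mu \gtrsim (\sum_j |I_j|)^n \inf_{K_\epsilon}|p^{(n)}|$, since $\inf_{t\in K_\epsilon}|p^{(n)}(t)| = |p^{(n)}|$ is just a constant for degree-$n$ polynomials, and for lower-degree polynomials the right side is $0$ and there is nothing to prove. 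So really the heart of the matter is the following: \emph{there exists an interval $I$, depending only on $\epsilon,n$ and the support structure of $\mu$, with $\mu(I)$ not too small, on which $|p|$ is pointwise comparable to $|p^{(n)}|$ times the appropriate power of $|I|$.}

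\textbf{The key steps, in order.} First, I would set up the combinatorial/covering side: given $\epsilon$, choose a finite family of intervals $\{I_1,\ldots,I_n\}$ with $\mu(\bigcup I_j)\ge 1-\epsilon/2$ and $\sum|I_j|$ within a factor $2$ of $|\mu|_{n,\epsilon/2}$; this is legitimate because $|\mu|_{n,\epsilon}$ is defined as an infimum and is monotone in $\epsilon$. Next, among these $n$ intervals, a pigeonhole argument picks out one interval $I_{j_0}$ carrying at least $(1-\epsilon/2)/n$ of the mass of $\mu$; passing to $I := I_{j_0}$ (or a slight enlargement within the ambient interval) and relabeling, we have $\mu(I) \gtrsim 1$ and $|I| \le |\mu|_{n,\epsilon}$. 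Then comes the analytic heart: on the single interval $I$, I want $\int_I |p|\,d\mu \gtrsim |I|^n |p^{(n)}|$. The mechanism for this is the standard one from Carbery--Christ--Wright / Arhipov--Karacuba--\v{C}ubarikov: write $p$ in a divided-difference or Lagrange form, or equivalently use that on an interval of length $\ell$, $\sup_{I}|p| \gtrsim \ell^n |p^{(n)}|$ (a reverse estimate, Markov/Bernstein-type, valid because the leading coefficient of $p$ is $p^{(n)}/n!$), and then upgrade the sup bound to an $L^1(\mu)$ bound. The upgrade from $\sup$ to $\int \cdot\,d\mu$ is where the sublevel-set technology enters: the set where $|p|$ is small (say below $\delta \sup_I |p|$) has small Lebesgue measure inside any fixed interval when $p$ has degree $\le n$, but since $\mu$ may be singular one must instead localize using the structure of $K_\epsilon$ — this is precisely why $K_\epsilon$, rather than $K$, appears, and why the ``proportionately closer on one side'' sets $K_L, K_R$ were introduced: they guarantee that near any point of $\mathrm{supp}\,\mu$ one can find a genuine subinterval of definite length (comparable to the local gap in $K$) sitting inside $K_\epsilon$, on which $|p^{(n)}|$ controls $|p|$ from below by Taylor's theorem with the derivative evaluated in $K_\epsilon$.

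\textbf{The main obstacle} I expect is exactly the passage from a pointwise/sup statement to the $L^1(\mu)$ statement when $\mu$ is an arbitrary regular probability measure — Lebesgue-measure sublevel set bounds are not directly available, so one cannot simply say ``$|p|$ is large off a small set.'' The resolution should be to run an induction on $n$: split $I$ at (or near) a zero of $p'$, or use the intervals $I_1,\dots,I_n$ recursively, applying the degree-$(n-1)$ version of the theorem to $p'$ on a subinterval carrying enough $\mu$-mass, and then integrate back up. Each integration step loses one power of $|I|$ and introduces one factor from the covering, which is why the final exponent is $|\mu|_{n,\epsilon}^n$ and why exactly $n$ intervals appear in the definition of $|\mu|_{n,\epsilon}$; the monotonicity of $|\mu|_{n,\epsilon}$ in both $n$ and $\epsilon$ is what lets the bookkeeping close, since at stage $k$ one works with $|\mu'|_{k,\epsilon'}$ for a restricted measure $\mu'$ and a slightly worse $\epsilon'$, all of which are dominated by $|\mu|_{n,\epsilon}$. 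Finally, the $\epsilon=0$ counterexamples alluded to after Theorem \ref{theorem0} show that the mass-splitting must leave a definite fraction $1-\epsilon$ behind at each stage, so the induction genuinely needs the $\epsilon>0$ slack; tracking where that slack is spent is the delicate part of writing the argument carefully.
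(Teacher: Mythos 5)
There are two genuine gaps here. First, your opening reduction --- prove the estimate for polynomials of degree $\le n$ and recover the $C^n$ case by a ``Taylor-expansion / approximation argument'' --- discards precisely the content of the theorem. For a polynomial of degree $\le n$ the derivative $p^{(n)}$ is constant, so $\inf_{K_\epsilon}|p^{(n)}| = \inf_I |p^{(n)}|$ and the set $K_\epsilon$ plays no role at all; the difficulty in Theorem \ref{theorem1} is that a general $f$ with sign-constant $f^{(n)}$ may have $f^{(n)}$ collapse to zero inside the gaps of $K$ away from their $\epsilon$-ends, and no approximation of such an $f$ by a single degree-$n$ polynomial controls $\int |f|\,d\mu$ from below with uniform constants. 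The paper handles general $f$ directly: the divided-difference identity of Proposition \ref{averageprop} produces a nonnegative kernel $\psi_t$ of total mass $1$ which is a polynomial of degree $\le n-1$ on every gap of $K$, and a compactness argument shows that on each gap a fixed fraction of its mass lies in the two $\epsilon$-ends of that gap, which belong to $K_\epsilon$ by construction of $K_L$ and $K_R$; this is how $\inf_{K_\epsilon}|f^{(n)}|$ rather than $\inf_I|f^{(n)}|$ is reached, and it is not recoverable from the polynomial case.

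Second, the analytic core of your plan --- pigeonhole a single interval $I$ out of a near-optimal covering with $\mu(I)\gtrsim 1/n$ and prove $\int_I |p|\,d\mu \gtrsim |I|^n|p^{(n)}|$ --- is false as stated and in any case cannot produce the power $|\mu|_{n,\epsilon}^n$. The restricted measure $\mu|_I$ may be concentrated on $n$ or fewer points (for instance at zeros of $p$), in which case the left-hand side vanishes while $|I|$ stays large; nothing in an arbitrary near-optimal covering rules this out. Moreover the interval selected by mass pigeonhole may be the shortest one (a tiny interval carrying half the mass alongside a long interval carrying the rest), so $|I|^n$ can be far smaller than $|\mu|_{n,\epsilon}^n$. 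The paper's mechanism is global rather than local: it introduces $\ell_n(\mu)$, the ratio of the $\mu$-averages of $|V_{n+1}|$ and $|V_n|$, which vanishes exactly when $\mu$ charges at most $n$ points, proves $\int|f|\,d\mu \gtrsim \ell_n(\mu)^n\inf|f^{(n)}|$ by integrating the identity of Proposition \ref{averageprop} against the Vandermonde-weighted product measure, and only afterwards converts $\ell_n(\mu)$ into $|\mu|_{n,\epsilon}$ through the children construction of Proposition \ref{intervals}, where the covering intervals are built from sublevel sets of $\prod_i|t-t_i|$ rather than chosen in advance. Your fallback of inducting on $n$ by applying the degree-$(n-1)$ case to $p'$ and ``integrating back up'' does not close this gap either, since a lower bound on $\int|p'|\,d\mu$ gives no lower bound on $\int|p|\,d\mu$: the fundamental theorem of calculus integrates $p'$ against Lebesgue measure, not against the possibly singular $\mu$.
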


Theorems of this type are not new; see, for example Carbery, Christ, and Wright \cite{ccw1999}, Arhipov, Karacuba and \v{C}ubarikov \cite{akc1979}, or Rogers \cite{rogers2005}.  The main new feature is the presence of $|\mu|_{n,\epsilon}$ on the right-hand side; in most previous cases $\mu$ is assumed to be the uniform measure on some set $K$ and $|\mu|_{n,\epsilon}$ is replaced by $|K|$.  To prove the full uniform estimate (theorem \ref{theorem2} and its corollaries), it is necessary to distinguish the length $|\mu|_{n,\epsilon}$ from the measure of the support of $\mu$.

The second theorem of this section establishes uniform integral estimates from below by a supremum of the function on a set $E$ which depends only on the class of functions to which $f$ belongs.  This is, of course, the most difficult task necessary to establish any result along the lines of theorem \ref{theorem0}:
\begin{theorem}
Suppose $\mu$ is a regular probability measure supported on some closed $K \subset I$; fix some positive integer $n$ and $\epsilon \in (0,1)$.  There exists a closed set $E \subset I$ with at most $n$ connected components for which $\mu(E) \geq 1 - \epsilon$ and \label{theorem2}
\[ \int |f(t)| d \mu(t) \gtrsim c_{n,\epsilon} C^{-1} \sup_{t \in E} |f(t)| \]
for any function $f$ which is of polynomial type $n$ on $(K_\epsilon,I)$ with constant $C$.
\end{theorem}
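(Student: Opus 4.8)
The plan is to reduce Theorem \ref{theorem2} to Theorem \ref{theorem1} by a stopping-time / covering argument that simultaneously produces the set $E$. First I would invoke Theorem \ref{theorem1} to get $\int |f| \, d\mu \gtrsim |\mu|_{n,\epsilon/2}^n \inf_{t \in K_{\epsilon/2}} |f^{(n)}(t)|$; since $f$ is of polynomial type $n$ on $(K_{\epsilon/2}, I)$ with constant $C$ (shrinking $\epsilon$ only enlarges $K_\epsilon$, so the polynomial-type hypothesis is inherited, at worst with the same $C$), this gives $\int |f|\, d\mu \gtrsim C^{-1} |\mu|_{n,\epsilon/2}^n \sup_{t \in I} |f^{(n)}(t)|$. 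The crux is then to pass from the supremum of $|f^{(n)}|$ to the supremum of $|f|$ itself on a large, $f$-independent set. The idea is that a function whose $n$-th derivative is comparable to a constant $M$ on an interval of length $\ell$ — and with $n$ derivatives not changing sign — cannot be small on all of that interval: on a subinterval of proportional length, Taylor expansion forces $|f| \gtrsim M \ell^n$ somewhere. More precisely, by choosing a near-optimal collection $I_1,\ldots,I_n$ of intervals with $\mu\bigl(\bigcup_j I_j\bigr) \geq 1 - \epsilon/2$ and $\sum_j |I_j| \leq 2|\mu|_{n,\epsilon/2}$, I expect to be able to take $E := \bigcup_j I_j'$ where each $I_j'$ is the ``middle portion'' (say the middle third, or an appropriately chosen sub-interval whose length is a fixed fraction $c_n$ of $|I_j|$) of $I_j$, shrunk so that $\mu(E)$ is still at least $1 - \epsilon$; this $E$ has at most $n$ components and depends only on $\mu$, $n$, $\epsilon$.

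The key step is the pointwise lower bound: I claim that for any $f \in C^n$ with $f^{(n)}$ of constant sign,
\[
\sup_{t \in I_j'} |f(t)| \gtrsim |I_j|^n \sup_{s \in I_j} |f^{(n)}(s)| - \text{(lower-order terms controlled elsewhere)}.
\]
The honest way to see this is to write $f(t) = \sum_{k=0}^{n-1} \frac{f^{(k)}(t_0)}{k!}(t-t_0)^k + \frac{1}{(n-1)!}\int_{t_0}^t (t-s)^{n-1} f^{(n)}(s)\, ds$ for a center point $t_0$ of $I_j$; the integral remainder has size $\gtrsim_n |I_j|^n \inf_{s \in I_j}|f^{(n)}(s)| \gtrsim_n C^{-1} |I_j|^n \sup_{s}|f^{(n)}(s)|$ at the endpoints of $I_j'$, while the polynomial part of degree $n-1$ can be killed by evaluating $f$ at enough well-separated points inside $I_j'$ and taking a suitable finite linear combination (a Vandermonde / divided-difference argument): the $(n-1)$-st divided difference of $f$ over $n$ equally spaced points of $I_j'$ equals $\tfrac{1}{(n-1)!}f^{(n)}(\xi)$ for some $\xi \in I_j$, hence is $\gtrsim C^{-1}|I_j|^{-1}\sup_s|f^{(n)}|\cdot |I_j| / \dots$; rearranging, the max of $|f|$ over those $n$ points is $\gtrsim_n C^{-1}|I_j|^n \sup_{s \in I_j}|f^{(n)}(s)|$. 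Summing the lengths, $\max_j |I_j|^n \geq n^{-n}\bigl(\sum_j |I_j|\bigr)^n \geq n^{-n} |\mu|_{n,\epsilon/2}^n$ up to the constant from the choice of cover, so picking the index $j$ achieving the largest $|I_j|$ yields $\sup_{t \in E}|f(t)| \gtrsim_n C^{-1}|\mu|_{n,\epsilon/2}^n \sup_{s\in I}|f^{(n)}(s)|$.

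Combining the two displayed inequalities: $\int|f|\,d\mu \gtrsim C^{-1}|\mu|_{n,\epsilon/2}^n\sup_I|f^{(n)}| \gtrsim C^{-1} \cdot C \cdot \bigl(\text{stuff}\bigr) \cdot \sup_E |f|$ — I must be slightly careful about which direction the constant $C$ enters, but both bounds are in terms of $\sup_I |f^{(n)}|$, so the chain closes and the two factors of $C^{\pm 1}$ do \emph{not} cancel; rather, one gets $\int|f|\,d\mu \gtrsim c_{n,\epsilon} C^{-1}\sup_E|f|$ as claimed, since the passage $\int|f|\,d\mu \to \sup_I|f^{(n)}|$ costs one factor $C^{-1}$ and the passage $\sup_I|f^{(n)}| \to \sup_E|f|$ is lossless in $C$.

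The main obstacle I anticipate is \emph{the divided-difference step done uniformly in $f$}: one needs the $n$ sample points to lie in the $f$-independent set $E$ and to be spaced by a fixed fraction of $|I_j|$, while still guaranteeing the divided difference is genuinely comparable to $|I_j|^n \sup|f^{(n)}|$ rather than being swamped by cancellation among the $f^{(k)}(t_0)$ terms — this is exactly the ``certain amount of subtlety'' the introduction warns about, and is presumably why $E$ must be allowed up to $n$ components and why one works with $K_\epsilon$ rather than $K$. A secondary technical point is verifying that trimming each $I_j$ to its middle fraction $I_j'$ still leaves $\mu$-mass at least $1-\epsilon$; this requires that the optimal-cover intervals not concentrate $\mu$ near their endpoints, which should follow by a further subdivision of the cover (replacing $n$ intervals by a bounded-in-$n$ number of intervals and re-optimizing), absorbing the resulting combinatorial loss into $c_{n,\epsilon}$.
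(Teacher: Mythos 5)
There is a genuine gap, and it is in the very last step where you ``close the chain.'' Your two displayed facts are both \emph{lower} bounds by the same quantity: $\int |f|\,d\mu \gtrsim C^{-1}|\mu|_{n,\epsilon/2}^n \sup_I |f^{(n)}|$ and $\sup_{E}|f| \gtrsim C^{-1}|\mu|_{n,\epsilon/2}^n \sup_I |f^{(n)}|$. From $A \gtrsim X$ and $B \gtrsim X$ one cannot conclude $A \gtrsim B$; to finish you would need the \emph{reverse} of your key step, namely an upper bound $\sup_E |f| \lesssim |\mu|_{n,\epsilon/2}^n \sup_I |f^{(n)}|$, and that is false. Take $f \equiv 1$, or $f(t) = 1 + \delta t^n$ with $\delta$ tiny: these are of polynomial type $n$ with constant $C=1$, $\sup_E |f| \approx 1$, yet $\sup_I |f^{(n)}| = \delta\, n!$ is arbitrarily small, so no bound through the top-order derivative alone can control $\sup_E |f|$. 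This is precisely the hard case of Theorem \ref{theorem2}: the size of $f$ on $E$ may be carried entirely by lower-order behavior, and your argument (Theorem \ref{theorem1} applied once, plus a divided-difference lower bound for $\sup_{E}|f|$) never engages it.

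The paper's proof is built around exactly this difficulty and is genuinely multi-scale: it iterates Proposition \ref{intervals} to produce nested families of children intervals ${\cal C}^{(1)},\ldots,{\cal C}^{(n)}$ at degrees $n, n-1,\ldots,1$; for each bottom interval $I_0$ it chooses the index $j$ maximizing $c^j |I_{j-1}|^j \sup_{t\in I_j}|f^{(j)}(t)|$, and shows by a Taylor-series comparison that for this maximizing $j$ the derivative $f^{(j)}$ varies by at most a factor of $2$ on $I_j$, so $f$ is of polynomial type $j$ on $(I_j,I_j)$ with constant $2$; then Theorem \ref{theorem1} is applied \emph{at degree $j$ to the localized measure} $\mu_{I_j}$, and the maximality of $j$ lets one descend to the $k=0$ term, i.e. $\sup_{t\in I_0}|f(t)|$ — this is the mechanism that handles functions dominated by low-order terms, and it has no counterpart in your proposal. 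Finally, since ${\cal C}^{(n)}$ has many intervals, the paper invokes Proposition \ref{closegaps} to merge them, uniformly over all $f$ with $f^{(n)}$ of constant sign, into a set with at most $n$ connected components of large $\mu$-measure; trimming a near-optimal cover to middle thirds does not serve this purpose, and your worry about mass near endpoints is a symptom of trying to build $E$ before knowing at which scale and derivative order the function is actually controlled.
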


In particular, an immediate corollary of theorems \ref{theorem1} and \ref{theorem2} is the following:
\begin{corollary}
Given a regular probability measure $\mu$ supported on $K \subset I$, an $\epsilon \in (0,1)$ and a positive integer $n$, there exists a closed interval $I'$ (possibly a single point) with $\mu(I') \geq \frac{1-\epsilon}{n}$ such that, for any function $f$ which is polynomial type $n$ on $(K_\epsilon,I)$ with constant $C$, \label{maincorollary}
\[ \int |f(t)| d \mu(t) \gtrsim C^{-1} \min\{ |I'|^j, |\mu|_{n,\epsilon}^j \} \sup_{t \in I'} |f^{(j)}(t)| \]
for any $j=0,\ldots,n$.
\end{corollary}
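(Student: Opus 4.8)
The plan is to read off the interval $I'$ directly from Theorem~\ref{theorem2} and then to bridge from $f$ to its intermediate derivatives by an elementary interpolation. Theorem~\ref{theorem2} produces a closed $E\subset I$ with at most $n$ connected components, each of which is a closed interval (possibly degenerate), such that $\mu(E)\geq 1-\epsilon$ and $\int|f|\,d\mu\gtrsim C^{-1}\sup_E|f|$ for every $f$ of polynomial type $n$ on $(K_\epsilon,I)$ with constant $C$. Since $E$ has at most $n$ components, the pigeonhole principle gives one component $I'$ with $\mu(I')\geq\frac{1-\epsilon}{n}$; because $I'\subset E$, this already yields $\int|f|\,d\mu\gtrsim C^{-1}\sup_{I'}|f|$, which is exactly the $j=0$ case (there $\min\{|I'|^0,|\mu|_{n,\epsilon}^0\}=1$). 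Note also that $|\mu|_{n,\epsilon}$ is automatically finite, since some bounded interval always carries $\mu$-mass at least $1-\epsilon$. It remains to treat $j=1,\dots,n$, and the point is to compare $\sup_{I'}|f^{(j)}|$ with the two quantities that Theorems~\ref{theorem1} and~\ref{theorem2} already control, namely $\sup_{I'}|f|$ and $\sup_I|f^{(n)}|$.

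For $j=n$ the comparison is immediate: by the definition of polynomial type $n$ one has $\sup_I|f^{(n)}|\leq C\inf_{K_\epsilon}|f^{(n)}|$, so Theorem~\ref{theorem1} gives $\int|f|\,d\mu\gtrsim|\mu|_{n,\epsilon}^n\inf_{K_\epsilon}|f^{(n)}|\geq C^{-1}|\mu|_{n,\epsilon}^n\sup_{I'}|f^{(n)}|$, and since $\min\{|I'|^n,|\mu|_{n,\epsilon}^n\}\leq|\mu|_{n,\epsilon}^n$ this suffices. For $1\leq j\leq n-1$ I would put $L:=\min\{|I'|,|\mu|_{n,\epsilon}\}$ (the case $L=0$ being trivial, as then $\min\{|I'|^j,|\mu|_{n,\epsilon}^j\}=0$), choose $t^*\in I'$ with $|f^{(j)}(t^*)|=\sup_{I'}|f^{(j)}|$, and pick a subinterval $I''\subset I'$ of length $L$ containing $t^*$. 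Let $p$ be the degree-$(n-1)$ Taylor polynomial of $f$ at $t^*$. Taylor's theorem with Lagrange remainder gives $\sup_{I''}|f-p|\lesssim L^n\sup_I|f^{(n)}|$, hence $\sup_{I''}|p|\lesssim\sup_{I''}|f|+L^n\sup_I|f^{(n)}|$; applying Markov's inequality to $p$ on $I''$ and evaluating at $t^*$ (where $p^{(j)}(t^*)=f^{(j)}(t^*)$ since $j\leq n-1$) yields
\[ \sup_{I'}|f^{(j)}| = |p^{(j)}(t^*)| \lesssim L^{-j}\sup_{I''}|p| \lesssim L^{-j}\sup_{I''}|f| + L^{n-j}\sup_I|f^{(n)}|. \]

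Multiplying this by $\min\{|I'|^j,|\mu|_{n,\epsilon}^j\}=L^j$ and using $I''\subset I'$ together with $L\leq|\mu|_{n,\epsilon}$ gives
\[ \min\{|I'|^j,|\mu|_{n,\epsilon}^j\}\,\sup_{I'}|f^{(j)}| \lesssim \sup_{I'}|f| + |\mu|_{n,\epsilon}^n\sup_I|f^{(n)}|, \]
and the right-hand side is $\lesssim C\int|f|\,d\mu$: the first term by Theorem~\ref{theorem2} applied on $I'\subset E$, the second by the polynomial-type bound followed by Theorem~\ref{theorem1} exactly as in the $j=n$ case. Rearranging proves the corollary. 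The only ingredient here not already contained in Theorems~\ref{theorem1} and~\ref{theorem2} is this Taylor-plus-Markov interpolation, and the one spot that requires care—the main obstacle, such as it is—is the choice of scale $L=\min\{|I'|,|\mu|_{n,\epsilon}\}$: interpolating at scale $|I'|$ directly would break down when $|I'|\gg|\mu|_{n,\epsilon}$ (for instance when $\mu$ nearly concentrates at a point of $I'$), because then the $\sup_I|f^{(n)}|$ term could not be reabsorbed into $\int|f|\,d\mu$ through Theorem~\ref{theorem1}; shrinking to scale $|\mu|_{n,\epsilon}$ while still keeping a maximizer of $|f^{(j)}|$ inside $I''$ is precisely what makes the two estimates match.
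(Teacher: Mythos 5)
Your proof is correct, and it follows the paper's overall scheme: take $I'$ to be a connected component of the set $E$ from Theorem \ref{theorem2} carrying $\mu$-mass at least $\frac{1-\epsilon}{n}$, and reduce the intermediate cases $1\le j\le n-1$ to the two quantities those theorems already control, namely $\sup_{I'}|f|$ (Theorem \ref{theorem2}) and $|\mu|_{n,\epsilon}^n\sup_I|f^{(n)}|$ (absorbed through the polynomial-type constant $C$ and Theorem \ref{theorem1}). Where you genuinely diverge is in the interpolation step. The paper proves an auxiliary proposition by selecting a maximizing index as in the proof of Theorem \ref{theorem2}, using the mean value theorem to make $f^{(j)}$ comparable to a constant on a subinterval of length $\min\{|I'|,\ell\}$, and then invoking the divided-difference identity \eqref{averages} at evenly spaced points; you instead work at the scale $L=\min\{|I'|,|\mu|_{n,\epsilon}\}$ with the degree-$(n-1)$ Taylor polynomial of $f$ at a maximizer of $|f^{(j)}|$ on $I'$ and apply the Markov brothers' inequality. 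Your route is more elementary and self-contained: it needs no sign hypothesis on $f^{(n)}$ at that stage, does not reuse Proposition \ref{averageprop}, treats each $j$ directly rather than through the maximizing-index device, and produces the dimensionally correct remainder term $L^n\sup|f^{(n)}|$ (the paper's proposition displays $\ell^j$ there, evidently a typo for $\ell^n$). The paper's route, in exchange, stays entirely within machinery already built for Theorems \ref{theorem1} and \ref{theorem2}. Your emphasis on the choice of scale $L$ is exactly the right point — it is the same scale the paper uses, and it is what allows the $f^{(n)}$ term to be reabsorbed via Theorem \ref{theorem1}. The only technicality worth noting is the assumption that $\sup_{I'}|f^{(j)}|$ is attained; $I'$ is compact by construction, and in any case a near-maximizer runs through your argument verbatim.
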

Theorem \ref{theorem0} from the introduction follows from this corollary when $f$ is polynomial of degree $n$ and $\mu$ is the uniform measure on $K$.

\subsection{Combinatorial considerations}

In what follows, let $V_n$ be the Vandermonde polynomial in $n$ variables, i.e., $V_n(t_1,\ldots,t_n) := \prod_{j > i} (t_j - t_i)$.  

The proof begins with a more detailed look at a standard idea:  the estimation of higher derivatives of a function $f$ via sampling at a finite number of points.  This is typically carried out using Lagrange interpolating polynomials.  The technique is the same here; the main difference is that there is that the structure of the ``remainder term'' is explored as well:
\begin{proposition}
Let $t_1, \ldots,t_{n+1}$ be distinct points in some interval $I$.  Given these points, there exists a nonnegative function $\psi_t(s)$, supported on $[t_1,t_{n+1}]$, with total integral $1$ such that \label{averageprop}
\begin{equation} \sum_{i=1}^{n+1} (-1)^{n+1-i} f(t_i) \frac{V_n(t_1,\ldots,\hat{t_i},\ldots,t_{n+1})}{V_{n+1}(t_1,\ldots,t_{n+1})} = \frac{1}{n!}  \int f^{(n)}(s) \psi_t(s) ds \label{averages}
\end{equation}
for any $f \in C^{(n)}(I)$ (here $\hat{t_i}$ indicates that $t_i$ is omitted).  This function $\psi_t$ has the following properties:  $(1)$ $\psi_t$ is supported on the convex hull of the $t_j$'s and has integral $1$, $(2)$ $\psi_t$ is a polynomial of degree at most $n-1$ on each interval containing none of the $t_j$'s, and $(3)$ $\psi_t \in C^{(n-2)}(I)$ when $n \geq 2$.
\end{proposition}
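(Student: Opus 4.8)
The plan is to recognize the left-hand side of \eqref{averages} as the $n$-th order divided difference $f[t_1,\ldots,t_{n+1}]$ (up to the usual combinatorial normalization), and then to use the Hermite--Genocchi integral representation of divided differences, which expresses $f[t_1,\ldots,t_{n+1}]$ as an integral of $f^{(n)}$ against a probability measure supported on the convex hull of the nodes. First I would verify the algebraic identity: expanding the divided difference via its standard determinant/quotient formula and comparing with the ratios $V_n(t_1,\ldots,\hat t_i,\ldots,t_{n+1})/V_{n+1}(t_1,\ldots,t_{n+1})$, one checks that these ratios are (up to the sign $(-1)^{n+1-i}$) exactly the weights in the partial-fraction expression for the divided difference, so the left-hand side equals $f[t_1,\ldots,t_{n+1}]$. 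This is a routine computation with Vandermonde determinants and cofactor expansion.

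Next I would invoke Hermite--Genocchi: for $f\in C^n(I)$,
\[ f[t_1,\ldots,t_{n+1}] = \int_{\Sigma_n} f^{(n)}\!\left(\sum_{i=1}^{n+1} s_i t_i\right) ds, \]
where $\Sigma_n$ is the standard $n$-simplex $\{s_i\geq 0,\ \sum s_i = 1\}$ with its natural $n$-dimensional measure normalized to total mass $1/n!$. Pushing this measure forward under the affine map $(s_1,\ldots,s_{n+1})\mapsto \sum_i s_i t_i$ produces, after rescaling by $n!$, exactly the claimed identity with $\psi_t(s)$ the density (with respect to Lebesgue measure on the line) of that pushforward. By construction $\psi_t \geq 0$, it is supported on the convex hull $[t_1,t_{n+1}]$ (assuming WLOG the $t_j$ are ordered), and its total integral is $1$; this gives property (1) immediately. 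One small wrinkle: Hermite--Genocchi is usually stated for distinct or coincident nodes without worrying about the density's regularity, so I would either cite it or give the short induction proof (differentiating under the integral and using the recursion $f[t_1,\ldots,t_{n+1}] = (f[t_2,\ldots,t_{n+1}] - f[t_1,\ldots,t_n])/(t_{n+1}-t_1)$).

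The substantive part — and the main obstacle — is establishing the regularity and piecewise-polynomial structure in properties (2) and (3). Here I would compute $\psi_t$ explicitly as a B-spline: indeed, the density of the pushforward of normalized simplex measure onto the line under the map above is precisely (a constant multiple of) the B-spline $M(s\,|\,t_1,\ldots,t_{n+1})$ with knots $t_1,\ldots,t_{n+1}$, up to the normalization matching \eqref{averages}. From the recursive definition of B-splines (or equivalently from the repeated convolution of indicator functions, which is how the simplex pushforward decomposes), one reads off that on any open interval containing none of the $t_j$ it is a polynomial in $s$ of degree $n-1$, proving (2). For (3): an $n$-th order B-spline with simple knots lies in $C^{n-2}$ globally, because each convolution with an $L^\infty$ indicator raises smoothness by one and one starts from a piecewise-constant function; alternatively, differentiate the B-spline recursion $n-2$ times and check the resulting function is still continuous at each knot. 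I would present this via the convolution picture, since it makes both the degree count and the smoothness count transparent, and since it dovetails with the fact that the weights came from a pushforward of simplex measure. The only delicate points are bookkeeping the correct normalization constant so the integral is exactly $1$ (not $1/n!$ — the $1/n!$ on the right of \eqref{averages} absorbs the difference), and handling the degenerate case where $f$ is not assumed to have $f^{(n)}$ of one sign (not needed here — the identity \eqref{averages} is purely about $C^n$ functions).
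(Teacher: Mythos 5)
Your route is genuinely different from the paper's and is correct in outline. The paper never mentions divided differences or B-splines: it subtracts the Taylor remainder $g(t)=\int_a^t \frac{(t-s)^{n-1}}{(n-1)!}f^{(n)}(s)\,ds$, uses the vanishing of a bordered Vandermonde determinant to pass from $f$ to $g$, and thereby obtains $\psi_t$ in closed form as a truncated-power sum, $\psi_t(s)=\sum_{t_i>s} n(t_i-s)^{n-1}/\prod_{j\neq i}(t_i-t_j)$; properties (2) and (3) are then read off from this formula, the normalization is checked by testing against $f(s)=s^n/n!$, and nonnegativity is proved separately by a Rolle's theorem argument. Your identification of the left side as the divided difference $f[t_1,\ldots,t_{n+1}]$ is right (the sign bookkeeping with the Vandermonde ratios works out), and Hermite--Genocchi buys you nonnegativity, the support statement, and total mass $1$ essentially for free, which is tidier than the paper's Rolle argument; the price is that (2) and (3), which the paper gets instantly from the explicit formula, now have to be extracted from B-spline theory (Curry--Schoenberg / Peano kernel), so you should either cite that identification or prove it.

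The one genuine flaw is your preferred justification of (2) and (3): the ``repeated convolution of indicator functions'' picture is \emph{false} for B-splines with general (non-uniform) knots; it is special to the cardinal case. Already for $n=2$ the pushforward of simplex measure is the asymmetric hat function on $[t_1,t_3]$ with peak at $t_2$, while a convolution $\chi_{[0,a]}*\chi_{[0,b]}$ is a symmetric triangle or a trapezoid, so the simplex pushforward does not decompose as you claim, and the ``each convolution raises smoothness by one'' count does not apply. The fix is available and you half-name it: either use the explicit Peano-kernel/truncated-power representation $\psi_t(s)=c\sum_i (t_i-s)_+^{n-1}\big/\prod_{j\neq i}(t_i-t_j)$ (which is exactly what the paper derives, and from which degree $n-1$ piecewise structure and global $C^{(n-2)}$ regularity for simple knots are immediate), or differentiate the de Boor--Cox recursion $n-2$ times and check continuity at each knot, noting that distinctness of the $t_j$ is what makes (3) true. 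With that substitution your argument is complete.
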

\begin{proof}
Let $a \in I$ be such that $a \leq \min_j t_j$.  If $f$ is any $n$-times continuously differentiable function on $I$, let
\[ g(t) := \int_a^t \frac{(t-s)^{n-1}}{(n-1)!} f^{(n)}(s) ds. \]
It is straightforward to check that $g$ is also $n$-times differentiable when $t > a$ and $g^{(n)}(t) = f^{(n)}(t)$ there ($g$ is nothing more than the remainder term for the degree $n-1$ Taylor polynomial at $a$).  This implies that $f-g$ is a polynomial of degree $n-1$ (the Taylor polynomial at $a$);  therefore it must be the case that the determinant
\begin{equation}
\left| \begin{array}{ccccc}
f(t_1) - g(t_1) & 1 & t_1 & \cdots & t_1^{n-1} \\
f(t_2) - g(t_2) & 1 & t_2 & \cdots & t_2^{n-1} \\
\vdots & \vdots & \vdots & \ddots & \vdots \\
f(t_{n+1}) - g(t_{n+1}) & 1 & t_{n+1} & \cdots & t_{n+1}^{n-1} 
\end{array}
\right| \label{zerodet1}
\end{equation}
vanishes.  Using Cramer's rule and the fact that the minors corresponding to entries in the first column are Vandermonde matrices, it follows that
\[ \sum_{i=1}^{n+1} (-1)^{i}(f(t_i) - g(t_i)) V_n(t_1,\ldots,\hat{t_i},\ldots,t_{n+1})  = 0. \]
since the $n$-th derivative of the difference vanishes at every point of $[t_1,t_{n+1}]$.  But this implies that
\begin{align*}
\sum_{i=1}^{n+1} & (-1)^{i}f(t_i) V_n(t_1,\ldots,\hat{t_i},\ldots,t_{n+1})  =
\sum_{i=1}^{n+1} (-1)^{i}g(t_i) V_n(t_1,\ldots,\hat{t_i},\ldots,t_{n+1}) \\
& = \int_a^\infty f^{(n)}(s) \frac{1}{(n-1)!} \sum_{t_i > s} (-1)^i (t_i - s)^{n-1} V_{n} (t_1,\ldots, \hat{t_i},\ldots, t_{n+1}) ds. 
\end{align*}
When $s \geq \max_j t_j$, the sum inside the integral vanishes because no terms are included.  When $s \leq \min_j t_j$, on the other hand, the sum again vanishes since it represents the determinant of a Vandermonde-type matrix whose first column is given by $(t_i - s)^{n-1}$ (which vanishes just like \eqref{zerodet1}).  To compute the integral of this function, it suffices to plug in $f(s) := \frac{s^n}{n!}$
:
\[ \sum_{i=1}^{n+1} (-1)^{i}\frac{t_i^n}{n!} V_n(t_1,\ldots,\hat{t_i},\ldots,t_{n+1}) = \frac{(-1)^{n+1}}{n!} V_{n+1}(t_1,\ldots,t_{n+1}). \]
Thus, one is led to define
\begin{align*}
\psi_t(s) := & \frac{n (-1)^{n+1}}{V_{n+1}(t_1,\ldots,t_{n+1})}  \sum_{t_i > s} (-1)^i (t_i - s)^{n-1} V_{n} (t_1,\ldots, \hat{t_i},\ldots, t_{n+1}) \\
& = \sum_{t_i > s} \frac{n (t_i - s)^{n-1}}{\prod_{j \neq i} (t_i - t_j)}.
\end{align*}
This $\psi_t$ has integral one and satisfies the correct integral identity.  Furthermore, it follows directly from this definition that $\psi_t$ is piecewise a polynomial of degree at most $n-1$ on all intervals not containing any $t_j$'s.  It is also immediate that $\psi_t \in C^{(n-2)}(I)$ when $n \geq 2$ because it is a finite linear combination of such functions (i.e., functions equal to $(t_i-s)^{n-1}$ when $s \leq t_i$ and equal to $0$ otherwise).  It remains to show that $\psi_t$ is nonnegative.  If this were not the case, it would be possible to find a function $f \in C^{(n)}(I)$ such that $f^{(n)}$ is strictly positive on $I$ but the right-hand side of \eqref{averages} is zero.  Examining \eqref{zerodet1}, this is possible only when there is a polynomial of degree $n-1$ which agrees with this function $f$ at $t_1,\ldots,t_{n+1}$.  Repeated applications of Rolle's theorem shows that this cannot be the case (i.e., the $n$-th derivative of $f$ must vanish at some point if $f$ agrees with a polynomial of degree $n-1$ at $n+1$ distinct points).
\end{proof}

It is perhaps worth noting that the three properties of $\psi_t$ (support and normalization, piecewise polynomial of degree $n-1$, and global $C^{(n-2)}$ regularity) uniquely determine $\psi_t$.  Even when the normalization condition is dropped, there is still only a one-dimensional family of such functions, namely, multiples of $\psi_t$; this is to say that there are no nontrivial piecwise functions satisfying these conditions and having integral $0$.  The proof of this fact proceeds inductively on $n$ in a fairly standard way.

The advantage gained in finding $\psi_t$ more-or-less explicitly is that it allows one to improve upon the trivial estimate from below on $\int f^{(n)}(s) \psi_t(s) ds$ to exploit the fact that, while $\psi_t$ is not supported at the points $t_j$, it must always, in fact, have some positive proportion of its mass which lies near the $t_j$'s.  In other words, if the $t_j$'s happen to be separated by some large distance, it never occurs that an overwhelming fraction of the mass of $\psi_t$ is concentrated inside that gap.  This fact will be made precise during the proof of theorem \ref{theorem1}.

\begin{proposition}
For any regular probability measure $\mu$ on an interval $I \subset \R$ and any positive integer $n$, let \label{intervals}
\[ (\ell_n(\mu))^n  := \frac{\int |V_{n+1}(t_1,\ldots,t_{n+1})| d \mu(t_1) \cdots d \mu(t_{n+1})}{\int |V_n(t_1,\ldots,t_n)| d \mu(t_1) \cdots d \mu(t_n)}.\]
The quantity $\ell_n(\mu)$ is zero if and only if $\mu$ is supported on a set of $n$ or fewer points.  Furthermore, given any $\epsilon > 0$, there exists a finite collection of at most $n$ closed, disjoint intervals $I_j$ (possibly length zero) such that $\mu( \bigcup_j I_j ) \geq 1 - \epsilon$, $| \bigcup_j I_j | \lesssim \ell_n(\mu)$ (that is, the Lebesgue measure of the union) and $\mu(I_j) \gtrsim 1$. These intervals will be called the $(n,\epsilon)$-children of $\mu$.  
\end{proposition}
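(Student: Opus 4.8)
The plan is to prove the two assertions separately, the second one resting on a single pointwise inequality. For the first assertion I would argue by positivity. If $\mu$ is carried by at most $n$ points, then for $\mu^{n+1}$-almost every $(t_1,\dots,t_{n+1})$ two of the arguments of $V_{n+1}$ coincide, so the numerator in the definition of $\ell_n(\mu)^n$ vanishes and hence $\ell_n(\mu)=0$ (with the convention that $\ell_n(\mu)=0$ also when the denominator vanishes, i.e.\ when $\mu$ is carried by fewer than $n$ points). Conversely, if the support of $\mu$ contains distinct points $p_1<\dots<p_{n+1}$, pick $\delta$ smaller than half the smallest gap $p_{k+1}-p_k$; then the box $\prod_{k=1}^{n+1}(p_k-\delta,p_k+\delta)$ has positive $\mu^{n+1}$-measure and $|V_{n+1}|$ is bounded below on it, so the numerator is positive, and likewise the denominator, giving $\ell_n(\mu)>0$. (One assumes $\int|V_n|\,d\mu^n<\infty$ here, which is automatic when $\mu$ is compactly supported, as in the applications; otherwise one simply reads the displayed identity below as an inequality rather than forming the quotient.)

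The heart of the second assertion — and really the only step with any content — is the pointwise lower bound
\[ \int_{\R}\prod_{i=1}^n|s-t_i|\,d\mu(s)\;\ge\;\frac{\epsilon}{(2n)^n}\,|\mu|_{n,\epsilon}^{\,n}\qquad\text{for all }t_1,\dots,t_n\in\R, \]
where $|\mu|_{n,\epsilon}$ is the length quantity defined above. To prove it, fix $L>0$ and set $J_i:=[t_i-\tfrac{L}{2n},t_i+\tfrac{L}{2n}]$, so $\sum_i|J_i|=L$. Either $\mu(\bigcup_i J_i)\ge 1-\epsilon$, and then — after merging overlapping members of $\{J_i\}$ into finitely many disjoint closed intervals — the definition of $|\mu|_{n,\epsilon}$ gives $|\mu|_{n,\epsilon}\le L$; or $\mu(\R\setminus\bigcup_i J_i)>\epsilon$, and then $\prod_i|s-t_i|>(L/2n)^n$ for $s$ outside $\bigcup_i J_i$, so $\int\prod_i|s-t_i|\,d\mu(s)>\epsilon(L/2n)^n$. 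Choosing $L$ so that $\epsilon(L/2n)^n$ equals the left-hand side rules out the second alternative and leaves exactly the claimed inequality. With this in hand I would use the factorization $V_{n+1}(t_1,\dots,t_{n+1})=V_n(t_1,\dots,t_n)\prod_{i=1}^n(t_{n+1}-t_i)$, integrate in $t_{n+1}$ first, and insert the pointwise bound inside the remaining integral against $|V_n|\,d\mu^n$:
\[ \int|V_{n+1}|\,d\mu^{n+1}\;\ge\;\frac{\epsilon}{(2n)^n}\,|\mu|_{n,\epsilon}^{\,n}\int|V_n|\,d\mu^n, \]
that is, $\ell_n(\mu)^n\ge\tfrac{\epsilon}{(2n)^n}|\mu|_{n,\epsilon}^n$, so $|\mu|_{n,\epsilon}\lesssim\ell_n(\mu)$ (the degenerate case, $\mu$ sitting on at most $n$ points, is covered by the first assertion, both sides then being $0$).

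It remains to exhibit the intervals with the stated properties. Apply the bound just obtained with $\epsilon/2$ in place of $\epsilon$ to get $|\mu|_{n,\epsilon/2}\lesssim_{n,\epsilon}\ell_n(\mu)$, and choose, by definition of the infimum, at most $n$ closed intervals of total length at most $|\mu|_{n,\epsilon/2}+\ell_n(\mu)\lesssim_{n,\epsilon}\ell_n(\mu)$ whose union has $\mu$-measure at least $1-\epsilon/2$ (if $\ell_n(\mu)=0$, take instead the at most $n$ atoms of $\mu$, of total length $0$). Merge overlapping or abutting intervals, which only shortens the total length and reduces the number of intervals, then delete any interval of $\mu$-measure less than $\epsilon/(2n)$. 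At most $n$ intervals are deleted, so the survivors — still at most $n$ disjoint closed intervals, possibly degenerate — carry total mass more than $(1-\epsilon/2)-\epsilon/2=1-\epsilon$, each has mass at least $\epsilon/(2n)\gtrsim_{n,\epsilon}1$, and their total length is $\lesssim_{n,\epsilon}\ell_n(\mu)$. These are the desired $(n,\epsilon)$-children.

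I do not expect a genuine obstacle: the argument turns on the covering dichotomy that produces the pointwise lower bound for $\int\prod_i|s-t_i|\,d\mu$, combined with the elementary identity $V_{n+1}=V_n\cdot\prod_i(t_{n+1}-t_i)$, after which everything is bookkeeping. The only points requiring mild care are the degenerate configurations in which $\mu$ is supported on few points (where several quantities vanish at once) and the harmless possibility that $\int|V_n|\,d\mu^n$ is infinite.
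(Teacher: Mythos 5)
Your proof is correct, but it reaches the children by a genuinely different route than the paper. The paper fixes the height $2\epsilon^{-1}(\ell_n(\mu))^n$ and pigeonholes over tuples $(t_1,\ldots,t_n)$ weighted by $|V_n|\,d\mu(t_1)\cdots d\mu(t_n)$: since averaging $\int \prod_i |t_{n+1}-t_i|\,d\mu(t_{n+1})$ against this weight produces exactly $(\ell_n(\mu))^n$, some tuple must satisfy $\mu(E_{t_1,\ldots,t_n}) \geq 1-\tfrac{\epsilon}{2}$ for the sublevel set $E_{t_1,\ldots,t_n} = \{ s : \prod_i |s-t_i| \leq 2\epsilon^{-1}(\ell_n(\mu))^n \}$, and the children are then the heavy connected components of that sublevel set (at most $n$ of them, with Lebesgue measure $\lesssim \ell_n(\mu)$ forced by the sublevel condition). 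You instead prove a lower bound valid for \emph{every} tuple, $\int \prod_i |s-t_i|\,d\mu(s) \geq \epsilon (2n)^{-n} |\mu|_{n,\epsilon}^n$, via the covering dichotomy around the $t_i$'s, integrate it against $|V_n|\,d\mu^n$ using $V_{n+1} = V_n \cdot \prod_i (t_{n+1}-t_i)$ to obtain the comparison $|\mu|_{n,\epsilon} \lesssim \ell_n(\mu)$, and then manufacture the children from near-minimizing interval collections in the definition of $|\mu|_{n,\epsilon}$, after merging and pruning light intervals. What your route buys is the explicit bridge $|\mu|_{n,\epsilon} \lesssim \ell_n(\mu)$, stated and proved as such (in the paper this comparison is only implicit, as a consequence of the proposition, even though it is what connects $\ell_n(\mu)$ to the formulation of theorem \ref{theorem1}), together with a uniform rather than pigeonholed estimate on $\int \prod_i|s-t_i|\,d\mu$; what the paper's route buys is brevity and children realized directly as components of a single polynomial sublevel set, with no bookkeeping about near-minimizers. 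Both arguments ultimately rest on the same two facts: the factorization of $V_{n+1}/V_n$ and the observation that $\prod_i |s-t_i|$ is large once $s$ avoids a neighborhood of every $t_i$. The only points where your write-up is slightly informal are harmless: the case $\int \prod_i|s-t_i|\,d\mu(s)=0$ in the dichotomy (where $L=0$ and $\mu$ must then sit on $\{t_1,\ldots,t_n\}$, so both sides vanish) and the convention when $\int |V_n|\,d\mu^n$ is infinite, which the paper also leaves tacit.
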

\begin{proof}
First of all, it is necessarily true that $\ell_n(\mu) = 0$ if the mass of $\mu$ is supported on a finite set of $n$ or fewer points, since in this case the Vandermonde polynomial $V_{n+1}$ vanishes almost everywhere on the $(n+1)-$fold product of $\mu$.  In all other cases, the distribution function $\mu((-\infty, t] \cap I)$ must take at least $n+1$ distinct, nonzero values, meaning that the interval $I$ may be partitioned into at least $n+1$ disjoint pieces, each of which has nonzero $\mu$-measure.  Since $V_{n+1}$ does not vanish when each $t_i$ belongs to a distinct element of the partition, the integral cannot be zero.

Assuming now that $\mu$ is not supported on a set of $n$ points, consider the ratio
\[ \frac{|V_{n+1}(t_1,\ldots,t_{n+1})|}{|V_n(t_1,\ldots,t_n)|} = \prod_{i=1}^n |t_n - t_i|. \]
If $E_{t_1,\ldots,t_n}$ is the set $\set{ t_{n+1}}{ \prod_{i=1}^n |t_n - t_i| \leq 2 \epsilon^{-1} (\ell_n(\mu))^n}$, it follows that
\[ \int \frac{|V_{n+1}(t_1,\ldots,t_{n+1})|}{|V_n(t_1,\ldots,t_n)|} d \mu(t_{n+1}) \geq (1 - \mu(E_{t_1,\ldots,t_n})) 2 \epsilon^{-1} (\ell_n(\mu))^n
\]
for each possible ensemble $t_1,\ldots,t_n$ of distinct points.  Multiplying both sides by $|V_{n}(t_1,\ldots,t_n)|$ and integrating $d \mu(t_1) \cdots d \mu(t_n)$, it follows that
\[ (\ell_n(\mu))^n \geq \inf_{t_1,\ldots,t_n \in I} \epsilon^{-1} (1 - \mu(E_{t_1,\ldots,t_n})) (\ell_n(\mu))^n \]
so there must be a choice of $t_1,\ldots,t_n$ for which $\mu(E_{t_1,\ldots,t_n}) \geq 1- \frac{\epsilon}{2}$.  This sublevel set consists of at most $n$ closed connected components.  The Lebesgue measure of the sublevel set is at most $2n \ell_n(\mu)$ (since the sublevel condition requires, in particular, that $t_{n+1}$ must be within distance $\ell_n(\mu)$ of at least one of the $t_j$'s for $j=1,\ldots,n$).  Let the $(n,\epsilon)$-children of $\mu$ be the connected components of $E_{t_1,\ldots,t_n}$ whose $\mu$ measure is at least $\frac{\epsilon}{2n}$.  Clearly they are disjoint, have bounded lengths, and the $\mu$ measure of the union is at least $1 - \epsilon$.
\end{proof}

\begin{proposition}
Let $t_1,\ldots,t_{N}$ be points in some interval $I$.  For each positive integer $n$, there is a closed set $E_n \subset I$ which consists of no more than $n$ connected components, contains $t_j$ for $j=1,\ldots,N$ and satisfies \label{closegaps}
\[ \sup_{t \in E_n} |f(t)| \leq (n+1) 2^n \max_{j=1,\ldots,N} |f(t_j)| \]
for any $f \in C^{n}(I)$ whose $n$-th derivative does not change sign.
\end{proposition}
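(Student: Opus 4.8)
The plan is to reduce first to the case of distinct points written in increasing order, $t_1 < \cdots < t_N$, and, after replacing $f$ by $-f$ if necessary, to assume $f^{(n)} \geq 0$; normalizing, take $M := \max_j |f(t_j)| = 1$. If $N \leq n$ one simply takes $E_n = \{t_1,\ldots,t_N\}$, a union of $N \le n$ degenerate intervals, and the estimate is trivial with constant $1$. The substantive case is $N > n$, where the pigeonhole principle forces several of the $t_j$ to lie in a common component of $E_n$, so that $f$ must be controlled on the gaps thereby swallowed.

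I would construct $E_n$ combinatorially from the $t_j$ alone. One natural choice: order the $N-1$ gaps $t_{i+1}-t_i$ by length and delete the interiors of the $n-1$ longest of them (ties broken arbitrarily); the remaining set $E_n$ is then a union of at most $n$ closed intervals with endpoints among the $t_j$, it contains every $t_j$, and within each component every consecutive gap is at most the $n$-th longest gap of the whole configuration. An equivalent recursive description — which for $n=2$ reduces to the elementary observation that a configuration has at most one \emph{dominant} gap, i.e. a gap longer than the entire extent of the points lying to either side of it — is: if there is no dominant gap set $E_n = [t_1,t_N]$; otherwise split at the dominant gap and recurse on the two pieces with the component budget reduced by one.

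The estimate on $E_n$ will come from Lagrange interpolation with a sign-controlled remainder. Fix $t \in E_n$, lying in a surviving gap $[t_i,t_{i+1}]$. Select $n$ points $s_0 < \cdots < s_{n-1}$ from $\{t_1,\ldots,t_N\}$ that ``frame'' $t$: comparably spaced (consecutive gaps within a bounded ratio), with total span not much larger than $t_{i+1}-t_i$, and with $t$ either inside the frame or just past one of its ends, so that the Lagrange basis functions satisfy $\sum_\nu |\ell_\nu(t)| \lesssim 2^n$ on the relevant range (the $n+1$ in the constant absorbing the node count and the mild loss from unequal spacing). Writing $f = P + R$ with $P$ interpolating $f$ at the $s_\nu$, Proposition \ref{averageprop} applied to the $n+1$ points $s_0,\ldots,s_{n-1},t$ gives $R(t) = \tfrac1{n!}\big(\prod_\nu (t - s_\nu)\big)\int f^{(n)}(s)\psi(s)\,ds$ with $\psi \geq 0$; since $f^{(n)} \geq 0$, the sign of $R(t)$ equals that of $\prod_\nu(t-s_\nu)$, which is $(-1)^{n-m}$ where $m$ is the number of $s_\nu$ below $t$. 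Choosing one frame for which this sign forces $f(t) \leq P(t)$, and a second frame (obtained by sliding the window by one node, hence flipping the parity of $m$) for which it forces $f(t) \geq P(t)$, one obtains $|f(t)| \leq (n+1)2^n$ in each case, i.e. $\sup_{E_n}|f| \leq (n+1)2^n \max_j |f(t_j)|$ after undoing the normalization. When $f^{(n)}$ does not change sign one of the two bounds may in fact be elementary (e.g. for $n=2$ convexity already gives $f \leq M$ on each $[t_i,t_{i+1}]$), but it is cleaner to treat both sides uniformly via frames.

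The main difficulty is geometric: one must show that every $t$ in the $E_n$ just constructed admits both of the required comparably-spaced frames drawn from $\{t_1,\ldots,t_N\}$ — equivalently, that deleting the $n-1$ longest gaps removes exactly the points that cannot be well framed. This is the $n$-variable generalization of the $n=2$ fact that at most one gap can exceed the extent of the points on either side of it; for general $n$ it should follow from a counting argument on gap lengths bounding by $n-1$ the number of gaps that fail a ``comparable on at least one side'' condition, together with the routine construction of frames by greedily stepping outward from $t_i$ and $t_{i+1}$ (and, to adjust the parity of $m$, by optionally shifting which of the points nearest $t$ are used). Keeping the multiplicative loss at $2^n$ while this greedy extension runs — without letting the extension, or the parity adjustment, force more than $n-1$ deletions — is the crux, and is precisely what pins down the constant $(n+1)2^n$.
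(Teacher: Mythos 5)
Your reduction steps (distinct increasing points, $f^{(n)}\geq 0$, the trivial case $N\leq n$) and your use of the divided-difference identity of Proposition \ref{averageprop} with the nonnegative kernel to control the sign of the interpolation remainder are both in the spirit of the paper's argument. But the heart of your proof is missing, and you say so yourself: everything hinges on the claim that the explicitly constructed set (delete the interiors of the $n-1$ longest gaps, or the ``dominant gap'' recursion, which incidentally are not equivalent constructions) consists only of points that admit two comparably-spaced ``frames'' drawn from $\{t_1,\ldots,t_N\}$ with $\sum_\nu|\ell_\nu(t)|\lesssim 2^n$, and this is exactly the step you defer to an unspecified ``counting argument.'' There are concrete reasons to doubt it as stated. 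First, the relevant obstruction is not that a gap is among the longest ones but that it is long relative to the extent of the configuration on \emph{both} sides at its scale (already for $n=2$, a convex function pinned at the $t_j$ can dip inside a gap by roughly $M(1+2g/\max(\text{left extent},\text{right extent}))$, so ``longest'' is the wrong invariant in general, even if it happens to contain the dominant gap when $n=2$). Second, comparably-spaced frames with span comparable to the adjacent gap simply need not exist for general configurations (take lacunary points $t_j=2^{-j}$: near any surviving gap the available nodes have spacing ratios that grow without bound as $n$ grows, and the Lebesgue function of a frame evaluated just beyond its end is of size $\binom{2n}{n}\sim 4^n$ even for equal spacing), so the constant $(n+1)2^n$ cannot be extracted along the route you sketch without a genuinely new idea.

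The paper sidesteps the need to describe the deleted gaps at all, and this is the idea your proposal lacks. For $N=n+1$ points it chooses the index $k$ maximizing the omitted Vandermonde $V_n(t_1,\ldots,\hat t_k,\ldots,t_{n+1})$ (necessarily $2\leq k\leq n$), subtracts the degree-$(n-1)$ interpolant through the other $n$ points, and uses \eqref{averages} twice (once for the original nodes, once with one node slid to $t$) to get $\sup_{I'}|f|\leq (n+1)2^n\max_j|f(t_j)|$ on the \emph{shorter} of $[t_{k-1},t_k]$, $[t_k,t_{k+1}]$ -- crucially, $I'$ depends only on the points, not on $f$, and no comparability of spacings is ever needed because the maximal-Vandermonde choice plays that role. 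For general $N$ it then argues by pigeonhole: if more than $n$ gaps were ``bad'' (i.e.\ some admissible $f$ beats the bound there), applying the $(n+1)$-point case to the left endpoints of $n+1$ bad gaps produces a good interval that contains one of those bad gaps, a contradiction; $E_n$ is the complement of the (boundedly many) bad gaps, with no explicit geometric characterization required. So your proposal is not a proof as it stands; to repair it you would either need to prove your framing claim (which I believe fails in the generality and with the constant you claim) or replace the explicit gap-deletion rule by an implicit ``bad gap'' count as in the paper.
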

\begin{proof}
Without loss of generality, it suffices to assume that $t_1 < t_2 < \cdots < t_N$ and that $N \geq n+1$.  It is also permissible to assume that $f^{(n)}$ is nonnegative.  

Consider first the case $N = n+1$.  Fix $t_1 < t_2 < \cdots < t_{n+1}$ and fix $k$ to be the index which maximizes $V_{n}(t_1,\ldots, \hat{t_k}, \ldots, t_{n+1})$; notice that the index can never equal $1$ or $n+1$ (since omitting $t_2$ or $t_{n}$, respectively, will always increase the product).  Let $I'$ be the shorter interval of $[t_{k-1},t_k]$ or $[t_{k},t_{k+1}]$ (if they have the same length, either choice is acceptable).  
Given $f \in C^{(n)}(I)$, let
\[ \tilde f(t) := f(t) - \sum_{j \neq k} f(t_j) \prod_{i \neq j,k} \frac{t - t_i}{t_j - t_i}. \]
Clearly $\tilde f(t_j) = 0$ for $j \neq k$.  It suffices to work with $\tilde f$ rather than $f$ since the values of $f$ and $\tilde f$ do not differ appreciably on $I'$ or at $t_k$.  More precisely, at $t=t_k$ one has
\[ |\tilde f(t_k)| \leq |f(t_k)| + \sum_{j \neq k} |f(t_k)| \frac{|V_n(t_1,\ldots,\hat{t_j},\ldots,t_{n+1})|}{|V_n(t_1,\ldots,\hat{t_k},\ldots,t_{n+1})|} \leq (n+1) \max_j |f(t_j)|. \]
Next, if $t \in I'$, then $\prod_{i \neq j,k} |t - t_i| \leq 2^{n-1} \prod_{i \neq j,k} |t_k - t_i|$ since $|t - t_i| \leq |t - t_k| + |t_k - t_i| \leq \min_j \{ |t_k - t_j| \} + |t_k - t_i|$.  It must therefore be the case that
\[|f(t)| \leq |\tilde f(t)| + n 2^{n-1} \max_j |f(t_j)|; \]
hence if $|\tilde f(t)| \leq C \max_j |\tilde f(t_j)|$, then $|f(t)| \leq (C(n+1) + n 2^{n-1}) \max_j |f(t_j)|$.

Now \eqref{averages} implies that, for $t \in I'$, $(-1)^{n+1-k} f(t) \geq 0$ when $f^{(n)}$ is nonnegative on $I$ by virtue of the fact that the Vandermonde polynomials are positive and $\tilde f(t_i)$ vanishes for $i \neq k$.  Fix some $t \in I'$; let $t_1' < t_2' < \cdots < t_{n+1}'$ be the sequence of numbers obtained by replacing $t_{k-1}$ with $t$ when $I'=[t_{k-1},t_k]$ or replacing $t_{k+1}$ with $t$ when $I' = [t_{k},t_{k+1}]$ (so that $t_j' = t_j$ for all but one value of $j$).  For this collection of points, \eqref{averages} implies that
\[ \sum_{j=1}^{n+1} (-1)^{n+1-j} \tilde f(t_j') \frac{V_n(t_1',\ldots,\hat{t_j'},\ldots,t_{n+1}')}{V_{n+1}(t_1',\ldots,t_{n+1}')} \geq 0 \]
as well.  Here all but two terms must vanish (by virtue of the vanishing of $\tilde f$).  Thus
\[ (-1)^{n+1-k} \tilde f(t_k) V_n(t_1',\ldots,\hat{t_k'},\ldots,t_{n+1}') \geq (-1)^{n+1-k} \tilde f(t) V_n (t_1',\ldots, \hat{t}, \ldots, t_{n+1}') \]
(where $\hat{t}$ is properly interpreted as $\widehat{t_{k-1}'}$ or $\widehat{t_{k+1}'}$ depending on which of those indices had its corresponding value replaced by $t$).  Both sides are nonnegative, and just as before, the Vandermonde polynomial on the left-hand side is at most a factor of $2^{n-1}$ larger than the corresponding polynomial on the right-hand side.  It must therefore be the case that $|\tilde f(t)| \leq 2^{n-1} |\tilde f(t_k)|$ and hence $|f(t)| \leq (n+1) 2^n \max_j |f(t_j)|$.

For the case of general $N$, suppose that there were more than $n$ intervals of the form $[t_k,t_{k+1}]$ for which
\[ \sup_{t \in [t_j,t_{j+1}]} |f(t)| \geq (n+1) 2^n \max_{j=1,\ldots,N} |f(t_j)|. \]
If $s_1,\ldots,s_{n+1}$ are the leftmost endpoints of such intervals, then there must exist an interval $I'$ on which
\[ \sup_{t \in I'} |f(t)| \leq (n+1) 2^n \max_{j=1,\ldots,n+1} |f(s_j)| \]
for any function $f$ whose $n$-th derivative does not change sign.  The leftmost endpoint of this interval coincides with one of the $s_j$'s, hence $I'$ must contain $[t_j,t_{j+1}]$, giving a contradiction.
\end{proof}

\subsection{The proofs of theorems \ref{theorem1} and \ref{theorem2}}

\begin{proof}[Proof of theorem \ref{theorem1}.]
The proof proceeds in two parts; first for any $f \in C^{n}(I)$,
\begin{equation}
 \int |f(t)| d \mu(t) \geq \frac{(\ell_n(\mu))^n}{(n+1)!} \inf_{t \in I} |f^{(n)}(t)|. \label{mineq1}
\end{equation}
Following this the general situation is considered:  if the support of $\mu$ is contained in some closed set $K$ and $K_\epsilon \supset K$ is as defined in theorem \ref{theorem1} then
\begin{equation}
 \int |f(t)| d \mu(t) \gtrsim (\ell_n(\mu))^n \inf_{t \in K_\epsilon} f^{(n)}(t) \label{mineq2}
\end{equation}
provided that $f^{(n)}$ does not change sign on $I$.

Regarding \eqref{mineq1}, for any function $f$ one has the trivial inequality
\begin{align*}
(n+1) & \int |f(t)| d \mu(t) \int |V_n(t_1,\ldots,t_n)| d \mu(t_1) \cdots d \mu(t_n)  
\\ 
& \geq \int \left| \sum_{i=1}^{n+1} (-1)^{n+1-i} f(t_i) V_n(t_1,\ldots,\hat{t_i},\ldots,t_{n+1}) \right| d \mu(t_1) \cdots d \mu(t_{n+1}).
\end{align*}
If one supposes further that $f \in C^{(n)}(I)$, equation \eqref{averages} from proposition \ref{averageprop} gives that
\begin{align*}
(n+1) & \int |f(t)| d \mu(t) \int  |V_n(t_1,\ldots,t_n)| d \mu(t_1) \cdots d \mu(t_n)  
\\ 
& \geq \frac{1}{n!} \int \left|\int f^{(n)}(s) \psi_t(s) ds \right| |V_{n+1}(t_1,\ldots,t_{n+1})| d \mu(t_1) \cdots d \mu(t_{n+1}).
\end{align*}
Since $|\int f^{(n)}(s) \psi_t(s) ds| \geq \inf_{t \in S} |f^{(n)}(t)|$, the inequality \eqref{mineq1} must be true.

The proof of \eqref{mineq2} and, hence, theorem \ref{theorem1} follows from a closer estimation of $\int f^{(n)}(s) \psi_t(s) ds$. Let $t_1,\ldots,t_{n+1}$ be taken from some closed set $K \subset I$.  Fix any $\epsilon > 0$ and let $K_\epsilon$ (as in theorem \ref{theorem1}) be the closed set of points $t$ for which $t_{+} := \inf \set{s \in K}{s \geq t}$ and $t_{-} := \sup \set{s \in K}{s \leq t}$ both exist and satisfy either $|t_+ - t| \leq \epsilon |t_+ - t_{-}|$ or $|t_{-} - t| \leq \epsilon|t_+ - t_{-}|$.  It suffices to show that, for any continuous function $g$ on $I$ which does not change sign,
\[ \int g(s) \psi_t(s) ds \gtrsim \inf_{s \in K_\epsilon} g(s). \]
To that end, let $p_c(t) := \sum_{i=0}^{n-1} c_i t^i$ where the $c_i$ are real coefficients whose squares sum to $1$.  The ratio
\[ \frac{\int_0^{\epsilon} |p_c(t)| dt + \int_{1-\epsilon}^\epsilon |p_c(t)| dt}{\int_{0}^1 |p_c(t)| dt} \]
is never equal to zero for any polynomial $p$ of degree at most $n-1$; therefore compactness of the unit sphere and homogeneity imply that there exists a constant $C_{n,\epsilon}$ such that $\int_0^\epsilon  |p(t)| dt + \int_{1-\epsilon}^1 |p(t)| dt \geq C_{n,\epsilon} \int_{0}^1 |p(t)| dt$ for any polynomial $p$ of degree $n-1$.  By a suitable change of variables, one has the integral of $|p|$ over the ends (each of length $\epsilon$ times the length of the whole interval) of any interval is bounded below by a constant times the integral over the whole interval.

Consider now the integral of $g$ against $\psi_t$.  Clearly there exist a countable number of open, disjoint intervals $I_j$ in the convex hull of $K$ such that
\[ \int g(s) \psi_t(s) ds = \int_K g(s) \psi_t(s) ds + \sum_j \int_{I_j} g(s) \psi_t(s) ds. \]
Since $K \subset K_\epsilon$, $\int_K g(s) \psi_t(s) ds \geq (\inf_{s \in K_\epsilon} g(s)) \int_K \psi_t(s) ds$.  As for each $I_j$, the ends of these intervals are in $K_\epsilon$ as well (and $g$ is nonnegative on the interior region of $I_j$).  Furthermore, $\psi_t$ is a polynomial of degree at most $n-1$ on $I_j$ since this interval contains no points of $K$ (hence none of the $t_j$'s).  Thus
\[ \int g(s) \psi_t(s) ds \geq \inf_{s \in K_\epsilon} g(s) \left( \int_K \psi_t(s) ds + C_{n,\epsilon} \sum_j \int_{I_j} \psi_t(s) ds \right). \]
Summing these finishes the proof.
\end{proof}

\begin{proof}[Proof of theorem \ref{theorem2}.]
Fix some $\epsilon'$ and positive integer $n$.  Let ${\cal C}^{(1)}$ be the collection of $(n,\epsilon')$-children of $\mu$ given by proposition \ref{intervals}.  Next let ${\cal C}^{(2)}$ be the collection of all $(n-1,\epsilon')$-children of intervals $I \in {\cal C}^{(1)}$, where the children of an interval $I$ are understood as the children of the measure $\mu_I := \mu(I)^{-1} \left. \mu \right|_I$ (note that this will always be well-defined since the $\mu$-measures of children always have a minimal amount of mass as controlled by $\epsilon'$).  Continue in this manner until the collection ${\cal C}^{(n)}$ (the $1$-children of the collection ${\cal C}^{(n-1)}$) is obtained.  

Suppose that $f$ is of polynomial type $n$ on $(K_\epsilon,I)$ with constant $C$.  This implies by (the proof of) theorem \ref{theorem1}, that
\begin{equation}
\int |f(t)| d \mu(t) \gtrsim C^{-1} (\ell_n(\mu))^n \sup_{t \in I} |f^{(n)}(t)|. \label{step0}
\end{equation}
 Now, given some interval $I_0 \in {\cal C}^{(n)}$, let $I_j$ be the unique element of ${\cal C}^{(n-j)}$ containing $I_1$.  For convenience, let $I_{-1} := I_0$ and $I_n := I$ (the interval on which $\mu$ is supported).  Fix $c^{-1} = \log \frac{3}{2}$, and choose the $j$ in $0,\ldots,n$ which maximizes
\begin{equation}
 c^j |I_{j-1}|^j \sup_{t \in I_{j}} |f^{(j)}(t)| \label{ptsup}
\end{equation}
(if $j$ is not unique, choose the largest such $j$).  If $j=n$, then inequality \eqref{step0} has as an immediate consequence that
\[ \int |f(t)| d \mu(t) \gtrsim C^{-1} |I_{j-1}|^j \sup_{t \in I_j} |f^{(j)}(t)| \]
for $j=0,\ldots,n$.  Suppose instead that the maximizing index $j$ is not equal to $n$.  In this case, let $s_0 \in I_{j}$ be the point where the supremum is obtained.  It follows that, for any $s \in I_{j}$, 
\begin{align*}
c^j |I_{j-1}|^j |f^{(j)}(s) - f^{(j)}(s_0)| & \leq \sum_{k=j+1}^{n} \frac{c^j |I_{j-1}|^j |s-s_0|^{k-j}}{(k-j)!} \sup_{t \in I_{k}} |f^{(k)}(t) | \\ 
& \leq \sum_{k=j+1}^n \frac{c^{j-k}}{(k-j)!} c^k |I_{k-1}|^k \sup_{t \in I_{k}} |f^{(k)}(t)| \\
& \leq (e^\frac{1}{c} - 1) c^j |I_{j-1}|^j \sup_{ t \in I_{j}} |f^{(j)}(t)|.
\end{align*}
It must therefore be the case that $\sup_{t \in I_{j}} |f^{(j)}(t)| \leq 2 \inf_{t \in I_{j}} |f^{(j)}(t)|$, meaning that $f$ is of polynomial type $j$ on $(I_j,I_j)$ with constant $2$.  Therefore theorem \ref{theorem1} and proposition \ref{intervals} guarantee that
\begin{align*}
 \int_{I_{j}} |f(t)| d \mu(t) & \gtrsim \mu(I_{j}) (\ell_j( \mu_{I_j}))^j \sup_{t \in I_{j}} |f^{(j)}(t)| \\
& \gtrsim  |I_{j-1}|^j \sup_{t \in I_{j}} |f^{(j)}(t)| \\
& \gtrsim  |I_{k-1}|^k \sup_{t \in I_k} |f^{(k)}(t)| \ \forall k = 0,\ldots,n.
\end{align*}
In particular, there is now a collection of intervals $I'$, namely ${\cal C}^{(n)}$ such that the integral $\int |f| d \mu(t) \gtrsim C^{-1} \sup_{t \in I'} |f(t)|$ for any function $f$ which is polynomial type $n$ on $(K_\epsilon,I)$ with constant $C$. By proposition \ref{closegaps}, these intervals can be joined together independently of $f$ so that they cover the same set as before but consist of no more than $n$ connected components.  In particular, the $\mu$-measure is at least $(1-\epsilon')^n$, which can be made greater than $1 - \epsilon$ for suitably-chosen $\epsilon'$.
\end{proof}

As for the remaining corollary, choose the interval $I'$ to be a connected component of $E$ as given by theorem \ref{theorem2} which has $\mu$-measure at least $\frac{1-\epsilon}{n}$.  In this case, the conclusion of the corollary follows immediately from the combined conclusions of theorems \ref{theorem1} and \ref{theorem2}:
\begin{proposition}
Suppose that $f \in C^{n}(I')$.  For any $j=0,\ldots,n$,
\[ \min\{ |I'|^j, \ell^j \}  \sup_{t \in I'} |f^{(j)}(t)| \lesssim \sup_{t \in I'} |f(t)| +  \ell^j \sup_{t \in I'} |f^{(n)}(t)|. \]
\end{proposition}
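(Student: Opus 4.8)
The plan is to read this as a Landau--Kolmogorov-type interpolation inequality on $I'$ and to prove it by pairing the Taylor remainder formula with the classical Markov inequality for polynomials, handling the two regimes $|I'|\le\ell$ and $|I'|>\ell$ separately. Write $L:=|I'|$. The case $j=n$ is immediate (the left-hand side is then at most $\ell^n\sup_{I'}|f^{(n)}|$) and the case $L=0$ is vacuous, so assume $0\le j\le n-1$ and $L>0$. I will use Markov's inequality in its rescaled form: there is a constant $M_n$ depending only on $n$ so that for every interval $J$, every polynomial $P$ of degree at most $n-1$, and every $0\le j\le n-1$,
\[ \sup_J|P^{(j)}|\ \le\ M_n\,|J|^{-j}\sup_J|P|, \]
which is the usual estimate on $[-1,1]$ after iteration in $j$ and an affine change of variables (and which also follows from interpolating $P$ at $n$ fixed nodes of $J$ and differentiating the Lagrange basis).

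The first step is a purely local inequality: for every closed subinterval $J\subseteq I'$,
\begin{equation*} |J|^j\sup_J|f^{(j)}|\ \lesssim\ \sup_J|f|\ +\ |J|^n\sup_J|f^{(n)}|. \end{equation*}
To prove it, pick $s_0\in J$ at which $|f^{(j)}|$ attains its maximum over $J$, and let $P$ be the degree $n-1$ Taylor polynomial of $f$ about $s_0$. The integral form of the remainder gives $\sup_J|f-P|\le (n!)^{-1}|J|^n\sup_J|f^{(n)}|$, hence $\sup_J|P|\le\sup_J|f|+(n!)^{-1}|J|^n\sup_J|f^{(n)}|$; since $P^{(j)}(s_0)=f^{(j)}(s_0)$, the rescaled Markov inequality gives $|J|^j|f^{(j)}(s_0)|\le M_n\sup_J|P|$, and the displayed bound follows.

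It remains to bring in the length $\ell$. If $L\le\ell$ then $\min\{L^j,\ell^j\}=L^j$, and the local inequality with $J=I'$, together with $L^n\le\ell^n$, is exactly the claim. If $L>\ell$ then $\min\{L^j,\ell^j\}=\ell^j$; cover $I'$ by finitely many closed subintervals $J_1,\dots,J_N$ of length exactly $\ell$ (for example $[(m-1)\ell,m\ell]$, $m=1,\dots,\lfloor L/\ell\rfloor$, together with $[L-\ell,L]$). Applying the local inequality on each $J_m$ and bounding $\sup_{J_m}|f|,\ \sup_{J_m}|f^{(n)}|$ by $\sup_{I'}|f|,\ \sup_{I'}|f^{(n)}|$ gives $\ell^j\sup_{J_m}|f^{(j)}|\lesssim\sup_{I'}|f|+\ell^n\sup_{I'}|f^{(n)}|$; since the $J_m$ cover $I'$ we have $\sup_{I'}|f^{(j)}|=\max_m\sup_{J_m}|f^{(j)}|$, and taking the maximum over $m$ finishes this case.

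The Taylor estimate and the scaling of Markov's inequality are routine; the one point that needs care is the regime $L>\ell$, where applying the local inequality on all of $I'$ at once would lose (it produces $|I'|^n$ in place of $\ell^n$). The remedy is to localize to subintervals of length \emph{exactly} $\ell$ and to recombine through a supremum rather than a sum, which keeps the final constant dependent on $n$ alone. (When the proposition is used to deduce the corollary, $\sup_{I'}|f|$ is controlled by $C\int|f|\,d\mu$ through theorem~\ref{theorem2}, and $\ell^n\sup_{I'}|f^{(n)}|$ by $C\int|f|\,d\mu$ through theorem~\ref{theorem1} and the polynomial-type hypothesis; this is why the right-hand side has precisely this form.)
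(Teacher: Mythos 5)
Your argument is correct, but it takes a genuinely different route from the paper's. The paper proves this proposition the same way it proved theorem \ref{theorem2}: it selects the index $j$ maximizing $2^j\min\{|I'|^j,\ell^j\}\sup_{I'}|f^{(j)}|$, observes that for this $j$ (unless $j=n$, which is trivial) the mean value theorem forces $|f^{(j)}|$ to vary by at most a factor of $2$ on a subinterval $I''\subset I'$ of length $\min\{|I'|,\ell\}$ containing its maximum point, and then applies the divided-difference identity \eqref{averages} at $j+1$ evenly spaced nodes of $I''$ to bound $\min\{|I'|^j,\ell^j\}\sup_{I'}|f^{(j)}|$ by a constant times $\sup_{I'}|f|$; the remaining indices then follow from the maximality of $j$. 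You instead prove a classical Landau--Kolmogorov-type inequality directly: Taylor expansion about the point where $|f^{(j)}|$ peaks plus Markov's inequality gives the local bound $|J|^j\sup_J|f^{(j)}|\lesssim\sup_J|f|+|J|^n\sup_J|f^{(n)}|$, and you localize to subintervals of length exactly $\min\{|I'|,\ell\}$, recombining by a maximum rather than a sum so the constant depends only on $n$. Your version is more elementary and self-contained; the paper's reuses machinery it has already built (\eqref{averages}) and avoids Markov's inequality, at the cost of the less transparent maximizing-index bookkeeping. One point worth making explicit: what you prove (and what the paper's own argument proves) is the inequality with $\min\{|I'|,\ell\}^n\sup_{I'}|f^{(n)}|$, in particular $\ell^n\sup_{I'}|f^{(n)}|$, as the second right-hand term; the power $\ell^j$ printed in the statement appears to be a typo, since for $0<j<n$ that version fails by scaling (for instance $n=2$, $j=1$, $f(t)=\sin(\delta t)$ on $I'=[0,\delta^{-3/2}]$ with $\ell=|I'|$ and $\delta\to0$), and it is the $\ell^n$ form that the deduction of corollary \ref{maincorollary} from theorems \ref{theorem1} and \ref{theorem2} actually uses, exactly as you note in your closing parenthetical.
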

\begin{proof}
As in the proof of theorem \ref{theorem2}, let $j$ be the index out of $0,\ldots,n$ which maximizes $2^j \min\{ |I|^j, \ell^j \}  \sup_{t \in I} |f^{(j)}(t)|$.  If $j=n$, then there is nothing else to prove.  Otherwise, for any $t,s \in I$, the mean-value theorem assures that
\[ |f^{(j)}(t) - f^{(j)}(s)| \leq |t-s| \sup_{u \in I'} |f^{(j+1)}(u)|. \]
Provided that $|t-s| \leq \min \{|I|,\ell \}$, the right-hand side is bounded above by $\frac{1}{2} \sup_{u \in I'} |f^{(j)}(u)|$.  In particular, if $I''$ is any interval of length $\min \{|I'|,\ell \}$ containing the point where $f^{(j)}$ achieves its maximum, then the supremum of $f^{(j)}$ on that interval is bounded by twice the infimum.  But in this case equation \eqref{averages} guarantees that $\min\{ |I'|^j, \ell^j \}  \sup_{t \in I'} |f^{(j)}(t)|$ is bounded below by a constant (depending only on $n$) times the supremum of $f$ (to apply equation \eqref{averages}, simply choose evenly-spaced points of $I''$).
\end{proof}

\section{$L^p$-improving estimates for polynomial curves}

This section is devoted to the proof of theorem \ref{averageop}.  The proof is itself divided into two parts.  The first is the main argument, relying on integral estimates and refinements (and, in particular, relying on theorem \ref{theorem0}).  With the one-dimensional integral estimates already established, the main portion of the proof of theorem \ref{averageop} is remarkably short.  The second part of the proof deals with counting solutions of the iterated flows of $X_1$ and $X_2$.  As is customary, this boils down to an application of B\'{e}zout's theorem; the difference here is that the vector fields $X_1$ and $X_2$ must first be lifted to a nilpotent Lie group (as was done by Christ, Nagel, Stein, and Wainger \cite{cnsw1999}) to produce a setting in which the flows correspond to polynomial mappings.

\subsection{Refinements, re-centering, and integral estimates}

The main innovation of the work of Tao and Wright over the original paper of Christ was the observation that, under certain circumstances, the integral of a function over the flow $\exp(tX)(x_0)$ of a vector field $X$ can be estimated from below by the value of that function or its derivatives evaluated at $t=0$ (the ``central'' part of central sets of a fixed width).  Of course, it is not always possible to make an estimate of this sort (that is, it is easy to construct examples of functions which happen for particular $x_0$ to be much larger at $t=0$ than at the other values of $t$ which form the support of the integral).  Tao and Wright circumvent this problem by introducing the notion of a set with width $w$; more recently, Christ \cite{christ2006} avoids this problem by introducing $(\epsilon,\delta)$-generic sets.  The problem with the construction of Tao and Wright is that, in the process, unavoidable small losses are encountered in various exponents which lead to less-than-sharp restricted weak-type results.  One way to avoid this problem, at least in the case of polynomial curves, is to use theorem \ref{theorem0} instead of introducing central sets of fixed width.  The application of theorem \ref{theorem0} comes in the following lemma which describes the set of $x_0$'s for which this re-centering can be accomplished.  This new set is called a refinement of the original:
\begin{proposition}
Let $U' \subset \R^{d+1}$ be open and $\pi : U \rightarrow \R^d$ have surjective differential at every point; let $X$ be a nonvanishing vector field on $U'$ for which $d \pi (X) = 0$.  Let $U \subset U'$ be open and bounded and fix a positive integer $n$.  There exists a nonzero constant $c$ depending on $n$ and the bounded subset $U$ such that, for any measurable $\Omega \subset U$, there is a refinement $\Omega' \subset \Omega$ with $|\Omega'| \geq c |\Omega|$ such that, for any $x_0 \in \Omega'$, the integral estimate
\begin{align*}
 \int |f(t,x_0)| & \chi_{\Omega} ( \exp(tX)(x_0)) dt \geq \\
& c \max_{j=0,\ldots,n} \left\{ \left( \frac{|\Omega|}{|\pi(\Omega)|} \right)^{j+1} \left| \frac{\partial^j f}{\partial t^j} (0,x_0) \right| \right\}
\end{align*}
holds for any function $f(t,x_0)$ which is a \label{refinement} polynomial of degree at most $n$ for each fixed $x_0$.
\end{proposition}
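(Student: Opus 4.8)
The plan is to slice $\Omega$ along the fibers of $\pi$ — which is natural here, since the flow of $X$ runs along those fibers — and to apply Theorem~\ref{theorem0} on each slice, after first discarding the slices along which $\Omega$ is atypically thin. Since $d\pi$ is surjective the fibers are one-dimensional, and since $X$ is nonvanishing with $d\pi(X)=0$, each connected component of a fiber is a single integral curve of $X$; a flow-box argument (using that one may take $\overline{U}$ compact in $U'$) shows moreover that a fiber meets $\overline{U}$ in at most a bounded number of arcs. Choose measurably a base point on each such arc, parametrize the arc by the flow of $X$, and let $\widetilde{\Omega}_y$ denote the resulting slice $\{s\in\R : \exp(sX)(\text{base point of }y)\in\Omega\}$. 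A point $x_0$ of $\Omega$ lying on arc $y$ then sits at some parameter $s_0\in\widetilde{\Omega}_y$, and since shifting the base point merely translates the slice, the substitution $s=s_0+t$ yields the exact identity
\[ \int |f(t,x_0)|\,\chi_{\Omega}(\exp(tX)(x_0))\,dt \;=\; \int_{\widetilde{\Omega}_y}|p(s)|\,ds, \qquad p(s):=f(s-s_0,x_0), \]
where $p$ is a polynomial of degree at most $n$ with $p^{(j)}(s_0)=\partial_t^j f(0,x_0)$.

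By the coarea formula, together with the two-sided bounds for $|X|$ and the Jacobian of $\pi$ over $\overline{U}$, the quantity $|\Omega|$ is comparable to $\int_{\pi(\Omega)}\mathcal{H}^1(\Omega\cap\pi^{-1}(y))\,dy$ and $|\pi(\Omega)|$ is comparable to the Lebesgue measure of $\pi(\Omega)$; hence $|\Omega|/|\pi(\Omega)|$ is comparable to the average total slice-length. Fixing $\epsilon:=\tfrac{1}{2}$ throughout, I would first discard every arc $y$ with $|\widetilde{\Omega}_y| < \alpha\,|\Omega|/|\pi(\Omega)|$; since there are only boundedly many arcs over each base point, the part of $\Omega$ carried by discarded arcs has measure $\lesssim \alpha|\Omega|$, so for $\alpha$ a sufficiently small constant (depending on the data) the surviving (``thick'') arcs carry at least half the mass of $\Omega$.

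Next, on each thick arc apply Theorem~\ref{theorem0} to the measurable set $K=\widetilde{\Omega}_y$: this furnishes an interval $I_y$ — depending only on $\widetilde{\Omega}_y$ and $n$, and crucially \emph{not} on $f$ — with $|\widetilde{\Omega}_y\cap I_y|\geq\frac{1-\epsilon}{n}|\widetilde{\Omega}_y|$ and
\[ \int_{\widetilde{\Omega}_y}|p(s)|\,ds \;\geq\; c_{n,\epsilon}\,|\widetilde{\Omega}_y|^{\,j+1}\sup_{s\in I_y}|p^{(j)}(s)| \;\geq\; c_{n,\epsilon}\,|\widetilde{\Omega}_y|^{\,j+1}\,|p^{(j)}(s_0)| \]
for every polynomial $p$ of degree at most $n$, every $j=0,\dots,n$, and every $s_0\in I_y$. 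Let $\Omega'$ consist of all $x_0$ lying on a thick arc at a parameter $s_0\in I_y$; then $\Omega'$ depends only on $\Omega$ and $n$, and combining the two steps through the coarea comparison yields $|\Omega'|\gtrsim\frac{1-\epsilon}{n}|\Omega|$, i.e.\ $|\Omega'|\geq c|\Omega|$.

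Finally, for $x_0\in\Omega'$ lying on a thick arc $y$ at parameter $s_0\in I_y$, the identity of the first paragraph, the estimate just displayed, and the bound $|\widetilde{\Omega}_y|\geq\alpha|\Omega|/|\pi(\Omega)|$ give
\[ \int |f(t,x_0)|\,\chi_{\Omega}(\exp(tX)(x_0))\,dt \;\geq\; c_{n,\epsilon}\,\alpha^{\,j+1}\Bigl(\frac{|\Omega|}{|\pi(\Omega)|}\Bigr)^{j+1}\Bigl|\frac{\partial^j f}{\partial t^j}(0,x_0)\Bigr| \]
for each $j=0,\dots,n$; taking the maximum over $j$ and shrinking $c$ once more completes the argument. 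The genuinely delicate part is the geometric reduction of the first two paragraphs: producing a measurable disintegration over the space of fiber-arcs, verifying via flow boxes and compactness that fibers meet $U$ in boundedly many arcs — so that discarding the thin arcs costs only a small fraction of $|\Omega|$ and one is left with the favorable ratio $|\Omega|/|\pi(\Omega)|$ rather than a single small slice-length — and checking that every comparison constant depends on the ambient data alone and not on $\Omega$. Once that is in place, Theorem~\ref{theorem0} supplies exactly the $f$-independent per-slice estimate the proposition demands.
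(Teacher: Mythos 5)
Your argument is essentially the paper's own proof: restrict to the integral curves of $X$ (flow-box coordinates), discard via a Fubini/Chebyshev argument the fibers along which the slice of $\Omega$ is much thinner than $|\Omega|/|\pi(\Omega)|$, and then on each surviving slice invoke Theorem~\ref{theorem0}, whose interval $I$ is independent of the polynomial, keeping only the points of the slice lying in that interval to form $\Omega'$. The only difference is that you spell out the measurable disintegration and the bounded number of fiber arcs (via compactness and injectivity of $\pi$ on transversals), details the paper leaves implicit, so the proposal is correct and follows the same route.
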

\begin{proof}
It suffices to restrict attention to the portion of $\Omega$ which lies on a particular integral curve of $X$ and show that a positive proportion of such points can be taken to lie in $\Omega'$.  In this case, one can change variables so that $X$ simply coincides with a coordinate direction and $\exp(tX)$ is simply translation by $t$ in that particular direction.  For a first approximation, $\Omega'$ is taken to be the set of all points $x_0$ such that 
\[ \int \chi_{\Omega}(\exp(tX)(x_0)) dt \geq c \frac{|\Omega|}{|\pi(\Omega)|} \]
for some small $c$; Fubini's theorem guarantees that the set $\Omega \setminus \Omega'$ is necessarily only a small fraction of the set $\Omega$.  Now the set $\Omega'$ as defined is still slightly too big.  However, restricting attention to the intersection of a fiber of $\pi$ with the set $\Omega'$,  it suffices to prove that, for any set $K \subset \R$, there is a subset $K' \subset K$ with $|K'| \geq c|K|$ for which
\begin{equation} \int |f(t,s)| \chi_{K}(t+s) dt \geq c \max_{j=0,\ldots,n} \left\{ |K|^{j+1} \left| \frac{\partial^j f}{\partial t^j} (0,s) \right| \right\} 
\label{intest}
\end{equation}
whenever $s \in K'$.  But this inequality follows directly from theorem \ref{theorem0} when, for example, $s$ lies inside the interval $I$ given by that theorem.  Thus, if $\Omega'$ is further reduced to contain only those points in each fiber which lie in the corresponding interval $I$ given by theorem \ref{theorem0}, the proposition follows.
\end{proof}
Notice that, since the refinement $\Omega'$ is contained in $\Omega$ and has $|\Omega'| \geq c |\Omega|$, it follows that
\[ \frac{|\Omega'|}{|\tilde{\pi}(\Omega')|} \geq c \frac{|\Omega|}{|\tilde \pi(\Omega)|} \]
for any projection $\tilde \pi$ (which may or may not be the same as the projection used for refining).  Thus when proposition \ref{refinement} is applied iteratively, it is always possible for the {\it original} $\Omega$ to appear on the right-hand side of \eqref{intest} at the price of a slightly worse constant (which is not a problem as long as the iterations terminate after a uniformly bounded number of steps).

The proof of theorem \ref{averageop} now proceeds exactly as in the work of Christ \cite{christ1998} or Tao and Wright \cite{tw2003}.  For each $x_0$, consider the mapping
\[ \Phi_{x_0} (t_1,\ldots,t_{d+1}) := \exp(t_1 X_1) \circ \cdots \circ \exp(t_{d+1} X_{d+1}) (x_0). \]
In the next section, it will be established that, for fixed $x_0$, this mapping has finite multiplicity everywhere except for some set of times $(t_1,\ldots,t_{d+1})$ which has $(d+1)$-dimensional Lebesgue measure zero.  Thus it follows that, for any (measurable) set $\Omega \subset U$,
\[ |\Omega| \geq c \int \chi_{\Omega} ( \Phi_{x_0}(t_1,\ldots,t_{d+1})) |J_{x_0}(t_1,\ldots,t_{d+1})| dt_1 \cdots dt_d \]
where $c$ is the reciprocal of the maximum multiplicity and $J_{x_0}(t)$ is the Jacobian determinant of the mapping $\Phi_{x_0}(t)$.  If $\Omega'$ is the refinement via the previous proposition with respect to the mapping $\pi_1$ and vector field $X_1$, it follows that
\begin{align*}
\int \chi_{\Omega} & (  \Phi_{x_0}(t_1,\ldots,t_{d+1}))  |J_{x_0}(t_1,\ldots,t_{d+1})| dt_1 \cdots dt_{d+1} \\
\geq  c' & \left( \frac{|\Omega|}{|\pi_1(\Omega)|} \right)^{j+1} \int \chi_{\Omega'} ( \Phi_{x_0}(0,t_2,\ldots,t_d)) \left| \frac{\partial^j J_{x_0}}{\partial t_1^j}(0,t_2,\ldots,t_d) \right| dt_2 \cdots d t_{d+1}
\end{align*}
for any $j=0,\ldots,n$.  But this new integral can, in turn, be estimated in exactly the same way by refining $\Omega'$ with respect to the mapping $\pi_2$ and the vector field $X_2$ and so on.  The end result is that, for any multiindex $\alpha$ there is a constant $c_{\alpha}$ such that 
\begin{equation} |\Omega| \geq c \prod_{i=1}^{d+1} \left( \frac{|\Omega|}{|\pi_i(\Omega)|} \right)^{\alpha_i+1} \left| \frac{\partial^\alpha J_{x_0} }{\partial t^\alpha}(0) \right| \label{bigest}
\end{equation}
for all $x_0$ in some iterated refinement of $\Omega$ (which, in particular, will have nonzero measure).  Notice, however, that when $\Omega$ is defined by taking $\chi_{\Omega}(x) := \chi_F(\pi_1(x)) \chi_{G} (\pi_2(x))$, this inequality may be manipulated to give theorem \ref{averageop}.

It is also worth noting that when equation \eqref{bigest} is summed over all multiindices $\alpha$, one obtains the rather interesting geometric inequality that 
\[ |\Omega| \geq c \left|B_0\left(x_0, \frac{|\Omega|}{|\pi_1(\Omega)|}, \frac{|\Omega|}{|\pi_2(\Omega)|} \right)\right| \]
where $B_0(x_0,\delta_1,\delta_2)$ is the image of the set $[-\delta_1,\delta_1] \times [-\delta_2 , \delta_2] \times \cdots \times [-\delta_{d+1},\delta_{d+1}]$ (with the usual periodicity convention) under the mapping $\Phi_{x_0}$.  The measure of this set is, in turn, comparable to the measure of the two-parameter Carnot-Carath\'{e}odory ball $B(x_0; \delta_1,\delta_2)$ of Tao and Wright.  Thus equation \eqref{bigest} gives a rather direct proof of the improved version of Tao and Wright's equation (66) mentioned in the second remark at the end of the paper.
\subsection{Lifting as related to polynomial curves}

In this section, it remains to show that $\Phi_{x_0}$ has bounded multiplicity outside some exceptional set and that the Jacobian determinant $J_{x_0}(t)$ is (up to a factor bounded away from $0$) a polynomial function of the $t$ parameters.  The main idea of the proof of these facts is a lifting argument involving the Baker-Campbell-Hausdorff formula.  The reader is referred to the paper of Christ, Nagel, Stein, and Wainger \cite{cnsw1999} for a thorough treatment of this topic.  In the proof at hand, this previous must be improved slightly (to obtain exact formulas rather than asymptotic ones), but there is not any added difficulty; in fact, it will suffice to only reproduce a few very small pieces of this much larger work.

To that end, let ${\cal N}$ be the collection of all words $w$ for which $X_w$ (the commutator as defined at the beginning of the paper) does not vanish identically.  For any $s \in \R^{{\cal N}}$, let
\[ s \cdot X := \sum_{w \in {\cal N}} s_w X_w. \]
Fix a bounded open set $U \subset \R^{d+1}$ on which $X_1$ and $X_2$ are defined and fix $x_0 \in U$.  Let $\tilde U \subset \R^{\cal N}$ be the collection of all $s$ for which $\exp( \theta s \cdot X)(x_0) \in U$ for all $\theta \in [0,1]$ (the inclusion of $\theta < 1$ guarantees that for any $s$, the associated integral curves used to define $\exp(s \cdot X)(x_0)$ remain in $U$).  Suppose for the moment that it is possible to establish the following two facts:
\begin{enumerate}
\item For any $x \in U$, the fiber $\exp(s \cdot X)(x_0) = x$ of the mapping $\exp(s \cdot X)(x_0)$ (as a function from $\tilde U$ to $U$) can be parametrized by a polynomial function, i.e., there exists a mapping $N_x(u)$ which parametrizes the fiber, has coordinate functions which are polynomials in the $u$ variables, and has surjective differential.
\item There exists a lifting $\tilde \Phi_{x_0}(t)$ of $\Phi_{x_0}(t)$ which is also polynomial, that is, $\exp ( \tilde \Phi_{x_0}(t) \cdot X)(x_0) = \Phi_{x_0}(t)$ and the coordinate functions of $\tilde \Phi_{x_0}$ are polynomial functions of $t$.
\end{enumerate}
These facts combined allow one to use B\'{e}zout's theorem, just as was employed in the original paper of Christ \cite{christ1998}.  Specifically, it follows from these facts that the equation $\Phi_{x_0}(t) = x$ has a solution for some $t$ only when the equations $\tilde \Phi_{x_0}(t) = N_x(u)$ have a solution for the same $t$ and some value of $u$.  In the usual manner, an additional parameter $v$ can be added to these equations in such a way that the resulting system of equations is homogeneous in $(t,u,v)$ and reduces to the original system when $v=1$.  Now B\'{e}zout's theorem guarantees that the number of irreducible components (in complex projective space) of the variety determined by these equations is at most the product of the degrees (and, in particular, does not depend on the particular choice of $x$).  See Fulton \cite{fulton1984}, chapter 8, section 4 (and, in particular, example 8.4.6) for this version of B\'{e}zout's theorem.  

Now The Jacobian determinant $J_{x_0}(t)$ is nonzero at $t_0$ only when the graph of $\tilde \Phi_{x_0}(t)$ is transverse to the fibers of $\exp(s \cdot X)(x_0)$ at the point $\tilde \Phi_{x_0}(t_0)$.  Thus solutions to $\Phi_{x_0}(t) = x$ (for real $t$) at which the Jacobian determinant $J_{x_0}(t)$ is nonvanishing arise only when there is an isolated solution of the system $\tilde \Phi_{x_0}(t) = N_x(u)$ in $(t,u)$ space; that is, when the Jacobian determinant with respect to $(t,u)$ of the mapping $\tilde \Phi_{x_0}(t) - N_x(u)$ is also nonzero.  This, in turn, guarantees that the solution $(t,u)$ remains isolated amongst complex solutions as well.  But any such isolated solution, in particular, corresponds to an irreducible component of the zero set of the homogeneous equations in complex projective space; it therefore follows that there is a uniform bound on the number of solutions to $\Phi_{x_0}(t) = x$ which occur where the Jacobian determinant is nonvanishing.  

As for any solutions at which the Jacobian determinant may vanish, Sard's lemma guarantees that the set of times $t$ at which the Jacobian determinant does vanish has ($d+1$-dimensional) measure zero.  In particular, this also means that the set of points $x \in U$ for which there can exist a solution to $\Phi_{x_0}(t) = x$ with vanishing Jacobian determinant is also a set of measure zero in $U$.  Thus, except for an exceptional set of $x$'s of measure zero, the system of equations $\Phi_{x_0}(t) = x$ has a uniformly bounded number of solutions, and the Jacobian determinant of $\Phi$ at each such solution is nonzero.  Thus the usual change-of-variables formula gives at once the desired inequality
\[ |\Omega| \geq c \int \chi_{\Omega}(\Phi_{x_0}(t)) |J_{x_0}(t)| dt \]
for any set $\Omega \subset U$.

A bit of notation is in order;  given vector fields $A$ and $B$, the vector denoted by $\left. d \exp(A)(B) \right|_{y}$ is meant to be the vector at the point $y$ obtained by transporting $B$ via the exponential mapping $\exp(A)$ (so in particular, it is the vector $B$ at $\exp(-A)(y)$ transported to $y$).

To establish the necessary properties of the lifting of $\Phi_{x_0}(t)$, two facts from geometry are required.  The first is the following:  suppose that $A$ is a vector field on $U$ which depends smoothly on some parameter $h$.  For any $x_0 \in U$ and any $s$ sufficiently small, the tangent vector of the curve $\gamma(h) := \exp( s A(h))(x_0)$ (as a function of $h$ for $s$ and $x_0$ fixed) is given by
\begin{equation} 
\frac{\partial}{\partial h} \gamma(h) = \left. \left( \int_0^1  d \exp((1 - \theta) s A(h)) \left( \frac{\partial A}{\partial h} \right)  d \theta \right) \right|_{\gamma(h)}.  \label{geom1}
\end{equation}
Equivalently, the tangent vector to the curve $\gamma(h)$ is given by transporting the vector
\begin{equation}
\left. \left( \int_0^1  d \exp( - \theta s A(h)) \left( \frac{\partial A}{\partial h} \right)  d \theta \right) \right|_{x_0} \label{geom3}
\end{equation}
to the point $\gamma(h)$ via the exponential map $\exp(s A(h))$.  The second fact needed is that, for any vector fields $A$ and $B$ on $U$; if $s$ is sufficiently small then
\begin{equation}
\frac{\partial}{\partial s} \left( \left. d \exp( s A ) (B) \right|_{x_0} \right) = \left. d \exp( s A ) ([B,A]) \right|_{x_0} \label{geom2}
\end{equation}
Note that equation \eqref{geom2} is nothing more than a computation of the Lie derivative of $B$ with respect to $A$ and can be found in Warner \cite{warner1971}, for example.  It is only \eqref{geom1} which requires a bit more explanation.  To that end, 
$\Gamma(s) := \exp(- s A(h)) \circ \exp(s A(h+\Delta h))(x_0)$.
\begin{align*}
\frac{\partial}{\partial s} g(\Gamma(s)) & = - \left. (A(h)g) \right|_{\Gamma(s)} + \left. \left( d \exp(-s A(h)) (A(h+\Delta h)) g \right) \right|_{\Gamma(s)} \\
& = \left. \left( d \exp(-s A(h)) (A(h+\Delta h)-A(h)) g \right) \right|_{\Gamma(s)}.
\end{align*}
It therefore follows that
\[ \frac{g(\Gamma(s)) - g(\Gamma(0))}{\Delta h} = \int_0^1 \left. \left( d \exp(-s \theta A(h)) \left( \frac{A(h+\Delta h) - A(h)}{\Delta h} \right) g \right) \right|_{\Gamma(s \theta)} d \theta. \]
As $\Delta h \rightarrow 0$, note that $\Gamma(s) \rightarrow x_0$. For fixed $s,h$, let $g (x) = f ( \exp(s A(h))(x))$ and let $\Delta h \rightarrow 0$.  The result is that
\[ \frac{\partial}{\partial h} f ( \exp(s A(h))(x_0)) = \left. \left( \int_0^1  d \exp( - \theta s A(h)) \left( \frac{\partial A}{\partial h} \right)  d \theta \right) \right|_{x_0} \! \! f (\exp(s A(h))(x_0)) \]
which is precisely what it means for the curve $\gamma(h) = \exp(s A(h))(x_0)$ to have a tangent vector obtained by transporting the vector \eqref{geom3} via the map $\exp(s A(h))$.

Consider the mapping $\varphi : \tilde U \rightarrow U$ given by $\varphi(s) := \exp(s \cdot X)(x_0)$.  Taking a Taylor expansion of the integrand \eqref{geom1} with respect to $\theta$ (computing these derivatives via \eqref{geom2}) allows one to easily compute derivatives of $\varphi$ with respect to the parameters $s_w$:
\begin{align*}
\frac{\partial}{\partial s_w} \varphi(s) & = \left. \left( \int_0^1 d \exp( (1-\theta) s \cdot X) \left( X_w \right) d \theta \right) \right|_{\varphi(s)} \\
&  = \left. \left( \sum_{j=0}^\infty \frac{(-1)^{j} [ (s \cdot X)^j X_w]}{(j+1)!} \right) \right|_{\varphi(s)}
\end{align*}
where $[(s \cdot X)^j X_w]$ is the repeated commutator given by $[(s \cdot X)^0 X_w] := X_w$ and $[(s \cdot X)^{j+1} X_w] = [s \cdot X,[(s \cdot X)^j X_w]]$ for each $j \geq 0$.
Note that this sum is, in fact, a finite sum by virtue of the vanishing commutator condition.  Also note that the coefficients of the sum are precisely the Taylor coefficients of $\frac{e^{-x}-1}{-x}$.

Now let $\tilde{c}_w(s)$ be the coefficient of $X_w$ when the sum
\[ \sum_{j=0}^\infty \frac{(-1)^j B_j [(s \cdot X)^j X_1]}{j!} \]
is expanded into a linear combination of words (where the $B_j$'s are the Bernoulli numbers; note that these are chosen so that the coefficients in $j$ are equal to the Taylor coefficients of $\frac{-x}{e^{-x}-1}$).  It follows that
\[ \left( \sum_{w \in {\cal N}} \tilde{c}_w(s) \frac{\partial}{\partial s_w} \right) \varphi(s) = \left. X_1 \right|_{\varphi(s)}. \]
The vector field $\sum_{w \in {\cal N}} \tilde{c}_w (s) \frac{\partial}{\partial s_w}$ is thus a lifting of $X_1$; moreover, since the coefficient $\tilde{c}_w(s)$ involves only those parameters $s_{w'}$ for which the length of $w'$ is less than the length of $w$, it follows that the integral curves of this lifted vector field in $\R^{\cal N}$ will be given by a polynomial function of the time parameter.  Thus composing these flows ($X_2$ may be lifted in precisely the same way) establishes the fact that $\Phi_{x_0}(t)$ has a lifting $\tilde{\Phi}_{x_0}(t)$ which is given coordinate-wise by polynomial functions.

To parametrize the fibers of $\varphi(s)$, the alternative formulation
\[ \frac{\partial}{\partial s_w} f (\varphi(s)) = \left. \left( \sum_{j=0}^\infty \frac{[(s \cdot X)^j X_w]}{(j+1)!} \right) \right|_{x_0} f ( \exp(s \cdot X)(x_0)) \]
is used (here the Taylor series expansion of the integrand of \eqref{geom3} is taken instead of \eqref{geom1}).  Suppose that constants $u_w$ are chosen so that $\sum_{w \in {\cal N}} u_w X_w$ equals the zero vector {\it at the particular point $x_0$} (the sum may or may not equal zero elsewhere).  If one defines coefficients $\tilde{d}_w(s,u)$ as before by expanding
\[ \sum_{w \in {\cal N}} \tilde{d}_w(s,u) X_w := \sum_{j=0}^\infty \frac{B_j [(s \cdot X)^j (u \cdot X) ]}{j!} \]
formally, it follows that the vector field $\sum_{w \in {\cal N}} \tilde{d}_w(s,u) \frac{\partial}{\partial s_w}$ will satisfy the property that
\[\left( \sum_{w \in {\cal N}} \tilde{d}_w(s,u) \frac{\partial}{\partial s_w} \right) \varphi(s) = 0. \]
In other words, for any appropriately chosen values of $u$, the corresponding vector field will be tangent to the fibers of $\varphi(s)$.  The integral curves of these fibers will be given by polynomial functions of $u$ for the same reason that the coefficients $\tilde{d}_w$ only depend on $s_{w'}$ for $w'$ of shorter length.  Now a simple dimension-counting guarantees that the fibers can, in fact, be smoothly parametrized by the flows of these vector fields, that is, the exponential flow of these vector fields based at $x_0$ has surjective differential with respect to the $u$ variables (one needs only apply the implicit function theorem; note that the curvature condition on the vector fields $X_1$ and $X_2$ guarantees that the differential $d \varphi$ is surjective).  Thus the earlier counting arguments hold, and in particular, $\Phi_{x_0}(t)=x$ has boundedly many solutions for all $x$ outside a set of measure zero.

To complete the proof of theorem \ref{averageop}, one fact remains to be established: namely, that the Jacobian determinant $J_{x_0}(t)$ is, up to a nonvanishing factor, a polynomial function of the parameters $t$.  Notice that the vector $\frac{\partial}{\partial t_i} \Phi_{x_0}(t)$ is equal to $d \exp(t_1 X_1) \circ \cdots \circ d \exp(t_{i-1} X_{i-1}) (X_i)$ evaluated at the point $\Phi_{x_0}(t)$.  Up to a bounded, nonvanishing factor, the Jacobian determinant can be evaluated by transporting these vectors to the point $x_0$ and then computing a determinant.  In that case, it follows that $J_{x_0}(t)$ is proportional to
\begin{align*}
 \det ( d & \exp(-t_{d+1} X_{d+1}) \circ \cdots \circ d \exp(-t_2 X_2)(X_1),  \\
& d \exp(-t_{d+1} X_{d+1}) \circ \cdots \circ d \exp(-t_3 X_3)(X_2), \\
& \left. \ldots, X_{d+1}) \right|_{x_0}. 
\end{align*}
But now \eqref{geom2} guarantees that each vector in this expression is a polynomial function of $t$, and multilinearity of the determinant establishes the desired property of $J_{x_0}(t)$.

\bibliography{mybib}

\end{document}